\DeclarePairedDelimiter\floor{\lfloor}{\rfloor}
\chardef\bslash=`\\ 
\newtheorem{thm}{Theorem}[section]
\newtheorem*{prop_A}{Proposition A}
\newtheorem*{theorem_1}{Theorem 1}
\newtheorem*{theorem_2}{Theorem 2}
\newtheorem*{theorem_1.3}{Theorem 1.3}
\newtheorem*{theorem_1.4}{Theorem 1.4}
\newtheorem{lem}[thm]{Lemma}
\newtheorem{prop}[thm]{Proposition}
\newtheorem{remark}{Remark}[section]
\newtheorem{example}{Example}[section]
\theoremstyle{definition}
\newtheorem{defn}{Definition}[section]
\newtheorem{notation}{Notation}
\newtheorem*{observation}{Observation}
\theoremstyle{remark}
\DeclareMathOperator{\aut}{\mathrm{Aut}}
\DeclareMathOperator{\Hom}{Hom}
\DeclareMathOperator{\gl}{GL}
\DeclareMathOperator{\stab}{Stab}
\DeclareMathOperator{\der}{Der}
\DeclareMathOperator{\dime}{dim}
\DeclareMathOperator{\rk}{\mathrm{rk}}
\DeclareMathOperator{\End}{End}
\DeclareMathOperator{\chc}{\mathrm{char}}
\DeclareMathOperator{\supp}{\mathrm{Supp}}
\newcommand{\extp}{\@ifnextchar^\@extp{\@extp^{\,}}}
\def\@extp^#1{\mathop{\bigwedge\nolimits^{\!#1}}}
\newlength{\hatchspread}
\newlength{\hatchthickness}
\newlength{\hatchshift}
\newcommand{\hatchcolor}{}
\tikzset{hatchspread/.code={\setlength{\hatchspread}{#1}},
         hatchthickness/.code={\setlength{\hatchthickness}{#1}},
         hatchshift/.code={\setlength{\hatchshift}{#1}},
         hatchcolor/.code={\renewcommand{\hatchcolor}{#1}}}
\tikzset{hatchspread=3pt,
         hatchthickness=0.4pt,
         hatchshift=0pt,
         hatchcolor=black}
\newcommand{\eval}[2][\right]{\relax
  \ifx#1\right\relax \left.\fi#2#1\rvert}
\let\abs=\envert
\begin{document}
\title{Triviality of the automorphism group of the multiparameter quantum affine $n$-space}
\author[A. Gupta and S. Mandal]{Ashish Gupta and Sugata Mandal}

\address{Ashish Gupta, Department of Mathematics\\
Ramakrishna Mission Vivekananda Educational and Research Institute (Belur Campus) \\
Howrah, WB 711202\\
India}
\email{a0gupt@gmail.com \thanks{}}
\address{Sugata Mandal, Department of Mathematics\\
Ramakrishna Mission Vivekananda Educational and Research Institute (Belur Campus) \\
Howrah, WB 711202\\
India}
\email{gmandal1961@gmail.com\thanks{}}


\begin{abstract}
A multiparameter quantum affine space of rank $n$ is the $\mathbb F$-algebra generated by indeterminates $X_1, \cdots, X_n$ satisfying $X_iX_j = q_{ij} X_jX_i \ (1 \le i < j \le n)$ where $q_{ij}$ are nonzero scalars in $\mathbb F^\ast$.  
The corresponding quantum torus is generated by the $X_i$ and together with their inverses subject to the same relations. So far the automorphisms of a quantum affine space have been considered mainly in the uniparameter case, that is, $q_{ij} = q$. We remove this restriction here.

Necessary and sufficient conditions are obtained for the quantum affine space to be rigid, that is, the only automorphisms are the trivial ones arising from the action of the torus $(\mathbb F^\ast)^n$. These conditions are based on the multiparameters $q_{ij}$ and also on the subgroup of $\mathbb F^\ast$ 
generated by these multiparameters. 

We employ the results in J. Alev and M. Chamarie,
  Derivations et automorphismes de quelques algebras quantiques,
 Communications in Algebra, 1992 (20), 1787-1802, and point out a small error in a main theorem in this paper which however remains valid with a small modification. 
 
 We also note that a quantum affine space whose corresponding quantum torus has dimension one necessarily has a trivial automorphism group. This is a consequence of a result of J.~M.~Osborne,  D.~S.~Passman, Derivations of Skew Polynomial Rings, J. Algebra, 1995, 176, 417--448. We expand the known list of examples of quantum tori that have dimension one and are thus hereditary noetherian domains.

\noindent \textbf{Keywords.} quantum torus, quantum affine space, automorphism, dimension, hereditary ring\\

\noindent 
\textbf{2010 Math. Subj. Class.}: 16S38; 16S35; 16S36; 16W20
\end{abstract}

\maketitle
\tableofcontents

\section{Introduction}\label{intro}

Quantum affine spaces and their localizations (known as quantum tori) are known to  
 play a key role in the theory of quantum groups~\cite{ART1997, BrGo} and also in non-commutative geometry~\cite{Man}. The quantum tori arise also in Lie theory as coordinate structures of extended affine Lie algebras~\cite{NeebKH2008} and in the representation theory of nilpotent groups~\cite{Br2000}. The automorphisms of these algebras have been considered in \cite{AC1992}, \cite{OP1995}, \cite{ART1997} and \cite{NeebKH2008}. The substantial paper \cite{MY12} considers the automorphisms of certain completions of a quantum torus algebra in connection with the automorphisms of quantum enveloping algebras. In this last paper it is noted that the automorphisms of a quantum affine space of certain types extend to bifinte unipotent automorphisms of the completion. The automorphisms of quantum division rings are studied in \cite{VA2000}.

Let us briefly recall the definitions. Let $\mathbb{F}$ be a field and $\mathfrak q =  (q_{ij})$ be a multiplicatively anti-symmetric $n \times n$-matrix with entries in $\mathbb F^\ast$. This means that
$q_{ii} = 1$ and $q_{ji} = q_{ij}^{-1}.$  A (rank-$n$) quantum affine space $\mathcal O_{\mathfrak q} = \mathcal O_{\mathfrak q}(\mathbb F^n)$ over the field $\mathbb F$ can is defined (as a quotient of the free algebra $\mathbb F\{ X_1, X_2, \cdots, X_n \}$) as follows

\begin{align}
\label{quantum_space_def}
\mathcal O_{\mathfrak q} &:= \mathbb F\{ X_1, X_2, \cdots, X_n \}/\langle X_i X_j - q_{ij}X_j X_i \mid 1 \le i < j \le n \rangle, \qquad q_{ij} \in \mathbb F^\ast.
\end{align}

This ring can also be presented as an iterated skew polynomial ring. 
Localizing a quantum affine space $\mathcal O_{\mathfrak q}$ at the multiplicative subset generated the variables $X_1, \cdots, X_n$ yields the Laurent version of this algebra which is known as the rank-$n$ quantum torus. We denote this quantum torus as $\widehat {\mathcal O}_{\mathfrak q}$.
A (rank-$n$) quantum torus can thus be defined as an algebra generated by the variables $X_1, \cdots, X_n$ together with their inverses subject to relations  \[ X_i X_j = q_{ij}X_j X_i.  \]

\begin{defn}\label{lam-grp}
For a quantum affine space $\mathcal O_{\mathfrak q}$ and the quantum torus 
$\widehat{\mathcal O}_{\mathfrak q}$ the $\lambda$-group (denoted $\Lambda$) 
is the subgroup of $\mathbb F^\ast$ generated by the multiparameters $q_{ij}$. 
\end{defn}

For an abelian group $A$ by the \emph{rank} of $A$ we mean its torsion-free rank, that is, \[ \rk(A): = \dim_{\mathbb Q}(A \otimes_{\mathbb Z} \mathbb Q). \]
 
 It was shown in \cite{MP}  that the Krull and the global dimension for the quantum tori coincide and so we will write $\dim(\widehat {\mathcal O}_{\mathfrak q})$ to denote either of these. In the study of the quantum torus this dimension plays an important role~\cite{MP, Br2000}. Let $\aut_{\mathbb F}(\mathcal O_{\mathfrak q})$ denote the $\mathbb F$-automorphism group of the $\mathbb F$-algebra $\mathcal O_{\mathfrak q}$. We show the following.
 \begin{theorem_1}
Suppose that $\chc(\mathbb F) = 0$ and $n \ge 3$. Let $\mathfrak q = (q_{ij})$ be a multiplicatively antisymmetric matrix such that for $ i<j $ atmost one entry $ q_{ij}$ equals to $1$. Then
\begin{itemize}
    \item[(1)] if no $q_{ij}$ equals to  $1$ for all $ i < j$,  then the automorphism group  \[ \aut_{\mathbb F}(\mathcal O_{\mathfrak q}) = {(\mathbb F^\ast)}^n \] if and only if the following conditions hold
    \begin{itemize}
        \item[(a)] $ E= 0$\label{E=01} and
        \item[(b)] there does not exist any non-identity permutation matrix $m_{\sigma} $  satisfying 
        \begin{equation*} \label{commute permutation1}
            \mathfrak{q}\mathfrak{m_{\sigma}}=\mathfrak{m_{\sigma}}\mathfrak{q}.
        \end{equation*}
    \end{itemize}
    \item[(2)] if exactly one entry, say $ q_{i'j'}$ with $i'<j'$ is $1$,  then  the automorphism group  \[ \aut_{\mathbb F}(\mathcal O_{\mathfrak q}) = {(\mathbb F^\ast)}^n \] if and only if the following conditions hold
    \begin{itemize}
        \item[(a)] \label{2nd E=01} $ E= 0$  and
        \item[(b)] there does not exist any permutation $\pi_{1}$ on the set $\{i',j'\} $ and  non-identity  permutation $ \pi_{2} $ on the set $ \mathcal{J}= \{1,2,\cdots,n\} \setminus \{i',j'\}$ satisfying
        \begin{equation*}\label{2nd commute permutation1}
          q_{i'r}=q_{\pi_{1}(i')\pi_{2}(r)}, \quad q_{j'r}=q_{\pi_{1}(j')\pi_{2}(r)},  \qquad \forall r \in \mathcal{J}  
        \end{equation*}
        such that $\mathfrak{q'}\mathfrak{m}_{\pi_{2}}=\mathfrak{m}_{\pi_{2}}\mathfrak{q'} $
          where  $\mathfrak q'$ is a multiplicatively anti-symmetric matrix  obtained from $ \mathfrak{q}$ by removing the $ i',j' $ rows and columns simultaneously.
    \end{itemize}
\end{itemize}

\end{theorem_1}
\begin{theorem_2}
A quantum affine space $\mathcal O_{\mathfrak q} = \mathcal O_{\mathfrak q}(\mathbb F^n) \ (n \ge 3)$ whose $\lambda$-group is torsion-free and has rank no smaller than $c: = \binom{n - 1}{2} + 1$ satisfies  
 \[ \aut_{\mathbb F}(\mathcal O_{\mathfrak q})= (\mathbb{F^{\ast}})^n. \]  
Moreover, $c$ is minimal with respect to this property and  for each $n \ge 3$ and each $r  < \binom{n - 1}{2} + 1$ there exists an algebra $\mathcal O_{\mathfrak q}$ with $\lambda$-group equal to $\mathbb Z^r$ but whose automorphism group embeds $ (\mathbb F^\ast)^n \rtimes \mathbb F^+$. 
\end{theorem_2}
This paper is organized as follows: Section 2 treats the facts concerning the quantum torus that we will be needing. Sections 3 and 4 are devoted to the proofs of Theorems 1 and Theorem 2 respectively.  Our final section addresses the question of obtaining quantum tori of rank $n$ with dimension one as the automorphism group will be always trivial in this case.

\section{The Quantum Torus}

\label{aut-qtor}
\subsection{Twisted Group Algebra Structure} 
Let $\Gamma : = \mathbb Z^n$. A twisted group algebra $\mathbb F \ast \Gamma$ is an $\mathbb F$-algebra which has a copy $\bar \Gamma : = \{\bar \gamma \mid \gamma \in \Gamma \}$ for an $\mathbb F$-basis satisfying 
\[ \bar \gamma \bar {\gamma'} = f(\gamma, \gamma')\overline{\gamma \gamma'}, \qquad \gamma, \gamma' \in \Gamma. \]  
for a suitable $2$-cocycle $f: \Gamma \times \Gamma \rightarrow \mathbb F^\ast$. A rank-$n$ quantum torus $\widehat {\mathcal O}_{\mathfrak q}$ has such a structure. Indeed if we define a map $\Gamma \rightarrow \widehat {\mathcal O}_{\mathfrak q}$ via \[ (\gamma_1, \cdots, \gamma_n) = :\gamma \mapsto \mathbf X^{\gamma} := X_1^{\gamma_1}\cdots X_n^{\gamma_n} \] 
then it can be checked that the above conditions are satisfied. 

Clearly each element $\alpha$ of a twisted group algebra $\mathbb F \ast \Gamma$ may be expressed as a finite sum $\alpha = \sum_{\gamma \in \Gamma} a_\gamma \bar \gamma\ (a_\gamma \in \mathbb F)$. The subset of $\gamma \in \Gamma$ such that $a_\gamma \ne 0$ is called the support of $\alpha$ and denoted as $\supp(\alpha)$. For a subgroup $B \le \Gamma$ the subset 
\[  \{ \alpha \in \mathbb F \ast \Gamma \mid \supp(\alpha) \subseteq B \} \]   
is a twisted group algebra of $B$ over $\mathbb F$ and is denoted as $\mathbb F \ast B$. \label{sec2.2} 
\subsection{The commutator map $\lambda$}  
 In a quantum torus the monomials 
$\mathbf X^{\gamma} := X_1^{\gamma_1}\cdots X_n^{\gamma_n}$ are units and the group-theoretic commutator $[\mathbf X^{\gamma}, \mathbf X^ {\gamma'}]$ defined as 
\[ [\mathbf X^{\gamma}, \mathbf X^ {\gamma'}] := \mathbf X^{\gamma} \mathbf X^{\gamma'}{(\mathbf X^{\gamma})}^{-1}{(\mathbf X^{\gamma'})}^{-1} \] 
yields an alternating bi-character ((e.g., \cite[Section 1]{OP1995})) \begin{equation}
\label{lmbdadefn}    
\lambda: \Gamma  \times \Gamma  \rightarrow \mathbb F^\ast,  \qquad  \lambda(\gamma, \gamma') = [\mathbf X^{\gamma},\mathbf X^{\gamma'}], \qquad   \mathbf \gamma, \gamma'  \in \Gamma. 
\end{equation} 
In particular,  
\[ \lambda(\mathbf e_i, \mathbf e_j) = [X_i, X_j] = q_{ij},  \qquad \forall 1 \le i,j \le n, \] 
where $\mathbf e_1, \cdots \mathbf e_n$ are the standard basis vectors of the $\mathbb Z$-module $\Gamma$.
\subsection{Dimension} 
\label{dim-of-qtorus}
As noted above a quantum torus $\widehat{O}_{\mathfrak q}$ is a twisted group algebra $\mathbb F \ast \Gamma$. It was shown in \cite[Theorem A]{Br2000}
that the dimension of a quantum torus equals the supremum of the ranks of subgroups $B \le \Gamma$ such that the subalgebra $\mathbb F \ast B$ is commutative (Note that $\mathbb F \ast C$ is commutative for any cyclic subgroup $C \le \Gamma$). It follows that $\dim(\widehat{O}_{\mathfrak q})$ equals the cardinality of a maximal system of independent commuting monomials in $\widehat{O}_{\mathfrak q}$.

\subsection{The automorphism group of a quantum torus}
 It is known (e.g., \cite{MP}) that the units of a quantum torus algebra are trivial, that is, are of the form $a \mathbf X^{\gamma}$, where $a \in \mathbb F^\ast$. 
By $\aut_{\mathbb F}(\widehat{\mathcal O}_{\mathfrak q})$ we denote the group of all 
 $\mathbb F$-automorphisms of the quantum torus  $\widehat {\mathcal O}_{\mathfrak q}$.  
It is easily seen (e.g., \cite{OP1995}) that the action of the group $\aut_{\mathbb F}(\widehat{\mathcal O}_{\mathfrak q})$  on the quantum torus $\widehat{\mathcal O}_{\mathfrak q}$ induces an action of this same group on the group $\mathscr U$ of trivial units fixing $\mathbb F^\ast$ element-wise.  There is thus an action of this same group $\aut_{\mathbb F}(\widehat{\mathcal O}_{\mathfrak q})$ on the quotient  $\mathscr U/\mathbb F^\ast \cong \Gamma$ yielding a homomorphism 
\begin{equation}\label{actn-Gma}   \aut_{\mathbb F}(\widehat{\mathcal O}_{\mathfrak q}) \longrightarrow \aut \Gamma = \gl(n, \mathbb Z) \end{equation} 
whose kernel is the group 
of all \emph{scalar automorphisms} defined by $\psi(\mathbf X^{\gamma}) = \phi(\gamma)(\mathbf X^{\gamma})$ for $\phi \in  \Hom(\Gamma, \mathbb F^\ast)$~\cite{OP1995}. Clearly  this kernel can be identified  with the algebraic torus  $(\mathbb F^\ast)^n$.

By \cite[Lemma 3.3(iii)]{OP1995} the image (in $\gl(n, \mathbb Z)$)
of the map in \eqref{actn-Gma} is the subgroup of all  $\sigma \in  \gl(n, \mathbb Z)$ such that 
  
\begin{equation}
\label{form_prsvng}
\lambda (\sigma \gamma, \sigma \gamma') = \lambda(\gamma, \gamma' ) \qquad\ \ \  \forall \gamma, \gamma' \in \Gamma.
\end{equation}
This subgroup is denoted $\aut(\mathbb Z^n, \lambda)$ is known as the \emph{nonscalar automorphism group}.
By the foregoing discussion we obtain the following exact sequence noted in \cite{NeebKH2008}:
\begin{equation}\label{Neeb-Key-Result1} 
1 \rightarrow  (\mathbb F^\ast)^n \rightarrow \aut_{\mathbb F}(\widehat{\mathcal O}_{\mathfrak q})  \rightarrow \aut(\mathbb Z^n, \lambda) \rightarrow 1. \end{equation}

\section{Proof of Theorem 1}  

\begin{defn}[Section 1.4 of \cite{AC1992}]
An automorphism $\sigma$ of $\mathcal O_{\mathfrak q}$ is called linear if it has the form
\begin{equation}\label{lin_aut_def} 
\sigma(X_i)= \sum_{j = 1}^n \alpha_{ij}X_j \qquad \forall i \in \{1, \cdots, n\}, \qquad (\alpha_{ij}) \in \gl(n, \mathbb F).
\end{equation}
\end{defn}
 
For a matrix $(\alpha_{ij}) \in \gl(n, \mathbb F)$ to define an automorphism as in \eqref{lin_aut_def} the   
following necessary and sufficient conditions must hold (\cite{AC1992}):
\begin{equation}\label{AC-cond-lin-aut}
\alpha_{ik} \alpha_{jl}(1 - q_{ij} q_{lk}) = \alpha_{il} \alpha_{jk}(q_{ij} - q_{lk})  \qquad\ \ \forall i < j , \ \ \forall k \le l.  
  \end{equation}
The last equation may be re-written as 
\begin{equation}\label{AC-cond-lin-aut-re}
   \alpha_{ik} \alpha_{jl}(q_{kl} - q_{ij}) = \alpha_{il} \alpha_{jk}(q_{kl}q_{ij} - 1)  \  \qquad\forall i < j , \ \ \forall k \le l. 
\end{equation}
Setting $k = l$ in the last equation we obtain 
\begin{equation}\label{setkeql}
\alpha_{ik} \alpha_{jk} (1 - q_{ij}) = \alpha_{ik} \alpha_{jk}(q_{ij} - 1)    \qquad \ \forall i < j, \ \ \forall k \in \{1, \cdots, n\} . 
\end{equation}

\begin{observation}\label{avd-1}
Clearly, the last equation means that if $\chc(\mathbb F) \ne 2$ and none of the multiparameters $q_{ij}\  (i < j)$ equals to unity  
then at least one of the coefficients $\alpha_{ik}$ and $\alpha_{jk}$ vanishes.
It is immediate that in this case the nonsingular matrix $(\alpha_{ij})$ has exactly one nonzero entry in each row and each column. 
\end{observation}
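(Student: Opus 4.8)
The plan is to read the conclusion straight off equation \eqref{setkeql}, with only two small inputs: the hypothesis on the characteristic and the hypothesis on the $q_{ij}$. First I would rewrite \eqref{setkeql} by collecting terms on one side, obtaining
\[ \alpha_{ik}\alpha_{jk}\bigl((1-q_{ij})-(q_{ij}-1)\bigr)=0, \qquad\text{i.e.}\qquad 2\,\alpha_{ik}\alpha_{jk}(1-q_{ij})=0, \]
for every $i<j$ and every $k\in\{1,\dots,n\}$. Since $\chc(\mathbb F)\neq 2$, the scalar $2$ is a unit in $\mathbb F$; since by hypothesis $q_{ij}\neq 1$ for all $i<j$, the factor $1-q_{ij}$ is also a unit. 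As $\mathbb F$ is a field, hence an integral domain, this forces $\alpha_{ik}\alpha_{jk}=0$, and therefore $\alpha_{ik}=0$ or $\alpha_{jk}=0$. This is exactly the first assertion of the observation.

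Next I would upgrade this to the statement about the shape of the matrix $(\alpha_{ij})$. Fix a column index $k$. The vanishing just established, applied to all pairs $i<j$, says that no two of the entries $\alpha_{1k},\dots,\alpha_{nk}$ can be simultaneously nonzero; hence column $k$ has \emph{at most one} nonzero entry. Because $(\alpha_{ij})\in\gl(n,\mathbb F)$ is invertible, it has no zero column, so in fact each column has \emph{exactly one} nonzero entry. Finally, if two distinct columns had their (unique) nonzero entries in the same row, those two columns would both be scalar multiples of a single standard basis vector, hence linearly dependent, contradicting invertibility. Therefore the map sending a column to the row of its nonzero entry is a bijection of $\{1,\dots,n\}$, which means $(\alpha_{ij})$ is a permutation matrix rescaled by an invertible diagonal matrix and in particular has exactly one nonzero entry in each row and in each column.

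I do not expect any genuine obstacle here: the only places the hypotheses are used are in asserting that $2$ and $1-q_{ij}$ are units, after which the argument is elementary linear algebra over a field. The one point deserving a word of care is that \eqref{setkeql} must be invoked for \emph{all} admissible $i<j$ and \emph{all} $k$, so that the conclusion covers every column of $(\alpha_{ij})$; and one should note explicitly that the step from "at most one nonzero entry per column" to "exactly one nonzero entry per row and column" genuinely uses the nonsingularity of $(\alpha_{ij})$, not merely the relations \eqref{AC-cond-lin-aut}.
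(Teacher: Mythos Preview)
Your argument is correct and is exactly the reasoning the paper has in mind; the paper itself gives no proof beyond the words ``Clearly'' and ``It is immediate,'' and you have simply written out the details (divide \eqref{setkeql} by $2(1-q_{ij})$, then use nonsingularity) that those words suppress.
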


The next proposition is an easy consequence of the preceding observation.

\begin{prop}\label{simple-cse-q(ij)-not-1}
Suppose that $\chc(\mathbb F) \ne 2$ and the entries of $\mathfrak q$ satisfy \begin{equation*} \label{sim-cond1}
q_{ij} \ne 1,\qquad \qquad \forall 1 \le i < j \le n.  
\end{equation*}
Then each linear automorphism $ \sigma $ of $\mathcal O_{\mathfrak q}$ has the form $ X_i \rightarrow k_{i} X_{\sigma(i)}$ for a suitable permutation $\sigma$ of the variables $ X_{i}$ where  $k_{i} \in \mathbb F^\ast $.
Thus, 
\begin{equation}\label{lin_aut}
\aut_{\mathrm L}(\mathcal O_{\mathfrak q}) \cong ({\mathbb F^\ast})^n \rtimes \mathcal{P}  
\end{equation}
for a subgroup $\mathcal P $ of $S_n$.
\end{prop}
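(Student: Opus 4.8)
The plan is to extract everything from the preceding observation, which already pins down the shape of the matrix $(\alpha_{ij})$. Since $\chc(\mathbb F)\ne 2$ and no $q_{ij}$ equals $1$, that observation tells us the invertible matrix attached to a linear automorphism $\sigma$ of $\mathcal O_{\mathfrak q}$ has exactly one nonzero entry per row and per column. Writing $\pi(i)$ for the column index of the unique nonzero entry in row $i$ (using $\pi$ to avoid clash with the automorphism $\sigma$), nonsingularity forces $\pi$ to be a bijection of $\{1,\dots,n\}$, and setting $k_i:=\alpha_{i\pi(i)}\in\mathbb F^\ast$ gives $\sigma(X_i)=k_iX_{\pi(i)}$. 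This is the first assertion; the remaining work is just organizing the monomial automorphisms into a group.

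Next I would determine exactly which pairs $(\pi,(k_i))$ define automorphisms. Substituting $\sigma(X_i)=k_iX_{\pi(i)}$ into the relation $X_iX_j-q_{ij}X_jX_i$ and cancelling $k_ik_j\ne 0$ reduces the requirement to
\[ q_{\pi(i)\pi(j)}=q_{ij}\qquad\forall\,1\le i<j\le n, \]
which is independent of the scalars $k_i$. Hence: for an admissible $\pi$, every choice of $k_i\in\mathbb F^\ast$ works; in particular the bare permutation $X_i\mapsto X_{\pi(i)}$ is itself an automorphism; and the admissible permutations are precisely the members of $\mathcal P:=\{\pi\in S_n:\ q_{\pi(i)\pi(j)}=q_{ij}\ \forall i,j\}$, which is the stabilizer of $\mathfrak q$ under the natural $S_n$-action on multiplicatively antisymmetric matrices, hence a subgroup of $S_n$ (closure under inverses is immediate from re-indexing the displayed identity).

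Finally I would assemble the semidirect product. The map $\Phi$ sending a linear automorphism to its underlying permutation is a homomorphism $\aut_{\mathrm L}(\mathcal O_{\mathfrak q})\to S_n$, since $(\sigma'\circ\sigma)(X_i)=k_i\,k'_{\pi(i)}\,X_{\pi'(\pi(i))}$ shows the permutation of $\sigma'\circ\sigma$ is $\pi'\circ\pi$; its image is $\mathcal P$ and its kernel is the group of diagonal automorphisms $X_i\mapsto k_iX_i$, which is identified with $(\mathbb F^\ast)^n$. The assignment $\pi\mapsto(X_i\mapsto X_{\pi(i)})$ is a well-defined homomorphic section of $\Phi$ (again because admissibility does not constrain the scalars), so the short exact sequence $1\to(\mathbb F^\ast)^n\to\aut_{\mathrm L}(\mathcal O_{\mathfrak q})\to\mathcal P\to 1$ splits, giving \eqref{lin_aut}.

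I do not anticipate a genuine obstacle once the preceding observation is granted; the only delicate points are the direction of the bookkeeping ($\Phi$ is a homomorphism, not an anti-homomorphism, with the chosen composition conventions) and the remark that admissibility of a permutation is insensitive to the scalars $k_i$, which is precisely what produces both the splitting and the clean semidirect-product form.
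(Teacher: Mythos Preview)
Your proof is correct and follows exactly the approach the paper has in mind: the paper simply asserts that the proposition is ``an easy consequence of the preceding observation'' without spelling out details, and your argument does precisely that extraction. You supply considerably more detail than the paper---in particular the characterization of admissible permutations via $q_{\pi(i)\pi(j)}=q_{ij}$ (which the paper records separately as Proposition~\ref{rem}) and the explicit verification of the split exact sequence---but this is elaboration of the same idea, not a different route.
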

For a permutation $ \sigma $ we denote by $\mathfrak{m_{\sigma}}$ the corresponding permutation matrix.
\begin{prop}\emph{(\cite[Remark 3.2]{OP1995})}\label{rem}
 Suppose that $ n \ge 3$. Let $\mathfrak q = (q_{ij})$ be a multiplicatively antisymmetric matrix. For a permutation $\sigma$ the corresponding change of variables $x_{i} \rightarrow x_{\sigma(i)}$ is an automorphism of   $\mathcal O_{\mathfrak q} $  if and only if 
 \begin{equation} \label{automorphism condition}
  q_{ij} = q_{\sigma(i) \sigma(j)}, \qquad \qquad \forall 1 \le i < j \le n.
 \end{equation}
 Equivalently,
 \[\mathfrak{q}\mathfrak{m_{\sigma}}=\mathfrak{m_{\sigma}}\mathfrak{q}.\]

\end{prop}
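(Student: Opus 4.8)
The plan is to characterize when the change of variables $x_i \mapsto x_{\sigma(i)}$ is a well-defined algebra endomorphism of $\mathcal{O}_{\mathfrak{q}}$, and then observe that such a map, being a bijection on the generators, is automatically an automorphism. Since $\mathcal{O}_{\mathfrak{q}}$ is the quotient of the free algebra $\mathbb{F}\{X_1,\dots,X_n\}$ by the ideal generated by the relations $X_iX_j - q_{ij}X_jX_i$ for $i<j$, a map on generators extends to an algebra homomorphism precisely when it sends each defining relation to $0$ in $\mathcal{O}_{\mathfrak{q}}$. So first I would compute the image of the relation $X_iX_j - q_{ij}X_jX_i$ under the substitution $X_k \mapsto X_{\sigma(k)}$: it becomes $X_{\sigma(i)}X_{\sigma(j)} - q_{ij}X_{\sigma(j)}X_{\sigma(i)}$.

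Next I would analyze when this element vanishes in $\mathcal{O}_{\mathfrak{q}}$. If $\sigma(i) < \sigma(j)$, then the relation $X_{\sigma(i)}X_{\sigma(j)} = q_{\sigma(i)\sigma(j)}X_{\sigma(j)}X_{\sigma(i)}$ holds, so the image vanishes if and only if $q_{ij} = q_{\sigma(i)\sigma(j)}$. If instead $\sigma(i) > \sigma(j)$, then $X_{\sigma(i)}X_{\sigma(j)} = q_{\sigma(i)\sigma(j)}X_{\sigma(j)}X_{\sigma(i)} = q_{\sigma(j)\sigma(i)}^{-1}X_{\sigma(j)}X_{\sigma(i)}$, and using multiplicative antisymmetry $q_{\sigma(j)\sigma(i)} = q_{\sigma(i)\sigma(j)}^{-1}$ one checks the condition again reduces to $q_{ij} = q_{\sigma(i)\sigma(j)}$ (interpreting $q_{\sigma(i)\sigma(j)}$ via antisymmetry when the indices are out of order). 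Here I should be a little careful to phrase \eqref{automorphism condition} as the requirement $q_{ij} = q_{\sigma(i)\sigma(j)}$ for all $i<j$ with the convention $q_{kl} = q_{lk}^{-1}$, so that the order of $\sigma(i),\sigma(j)$ is immaterial; this is exactly the content of the matrix identity $\mathfrak{q}\mathfrak{m}_\sigma = \mathfrak{m}_\sigma\mathfrak{q}$, since the $(i,j)$ entry of $\mathfrak{m}_\sigma^{-1}\mathfrak{q}\mathfrak{m}_\sigma$ is $q_{\sigma(i)\sigma(j)}$ and $\mathfrak{m}_\sigma^{-1} = \mathfrak{m}_\sigma^{T}$ commutes with $\mathfrak{q}$ iff $\mathfrak{m}_\sigma$ does.

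For the converse direction, if \eqref{automorphism condition} holds then every defining relation maps to $0$, so the substitution extends to an endomorphism $\phi_\sigma$ of $\mathcal{O}_{\mathfrak{q}}$; applying the same construction to $\sigma^{-1}$ gives a two-sided inverse, so $\phi_\sigma$ is an automorphism. The role of the hypothesis $n \ge 3$ is worth a remark: it guarantees that every permutation matrix arises from an honest permutation of $\{1,\dots,n\}$ and there are no low-rank degeneracies, but more to the point it is the ambient hypothesis under which the surrounding results (and \cite[Remark 3.2]{OP1995}) are stated; the equivalence of the displayed condition with $\mathfrak{q}\mathfrak{m}_\sigma = \mathfrak{m}_\sigma\mathfrak{q}$ is a purely linear-algebraic bookkeeping fact valid for all $n$.

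The only mildly delicate point — and the one I would present most carefully — is the translation between the index-wise condition $q_{ij} = q_{\sigma(i)\sigma(j)}$ and the matrix commutation relation, making sure the antisymmetry convention is handled consistently when $\sigma$ reverses the order of a pair $(i,j)$; everything else is a direct verification using the universal property of the free algebra modulo relations. Since the statement is explicitly attributed to \cite[Remark 3.2]{OP1995}, I would keep the argument brief and essentially cite it, spelling out only the short computation above for the reader's convenience.
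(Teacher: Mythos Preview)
Your argument is correct: checking that the substitution $X_k \mapsto X_{\sigma(k)}$ kills each defining relation $X_iX_j - q_{ij}X_jX_i$ in $\mathcal O_{\mathfrak q}$ is exactly the right approach, and the translation to the commutation $\mathfrak q\,\mathfrak m_\sigma = \mathfrak m_\sigma\,\mathfrak q$ is the routine bookkeeping you describe. Note, however, that the paper itself gives no proof of this proposition at all --- it is stated with the attribution to \cite[Remark~3.2]{OP1995} and used without further comment --- so your write-up is already more detailed than what appears there; a one-line citation in the style of the paper would also be acceptable.
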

\begin{remark}
    In the situation of Proposition \ref{simple-cse-q(ij)-not-1} if $ q_{ij} \neq -1$ and $\sigma $ is a non-identity permutation in $ \mathcal{P}$ and $\sigma$ is decomposed into cycles then any cycle in this decomposition of $ \sigma $ must have odd length. Indeed if $ (i_1 i_2)$ is a 2-cycle in the decomposition of $\sigma$ then in view of  \ref{automorphism condition} we have $({q_{i_1 i_2}})^{2} = 1$. Again if $(i_1i_2i_3 \cdots i_m)$ is an $m$-cycle ($m \geq 4)$ where $m$ is even then by \eqref{automorphism condition} we have 
\begin{equation*}
    (q_{i_{m/2},i_{m}})^{-1} = 
 q_{i_{1},i_{m/2+1}}=q_{i_{2},i_{m/2 +2}} = \cdots = 
    q_{i_{m/2-1},i_{m/2+m/2-1}} =q_{i_{m/2},i_{m}}
\end{equation*}
whence $(q_{i_{m/2},i_{m}})^2=1 $. 
\end{remark}
 Let $\der(\mathcal O_{\mathfrak q})$ denote the module of derivations of  $\mathcal O_{\mathfrak q}$.
 We recall the submodule  $ E $  of $\der(\mathcal O_{\mathfrak q})$ in \cite{AC1992}.
Suppose \[\Lambda_{i} = \{ \nu \in {\mathbb{N}}^{n} :  \nu_{i} = 0~~\text{and}~~\prod_{k} q_{kj}^{{\nu}_{k}} = q_{ij}~~ \text{for all}~~ j ~~ \text{such that}~~  j\neq i \}.\]
 For all $ \nu \in \Lambda_{i}$ there exists a derivation $ D_{i\nu}$ of $\mathcal O_{\mathfrak q}$  defined by \[ D_{i \nu}(x_{j})= \delta_{ij} {\bf{x}}^{\nu}\] for all $ j $. Note that $D_{i \nu}^{2} = 0$. We then  define  \[E = \oplus_{i}(\oplus_{\nu \in \Lambda_{i}} \mathcal O_{\mathfrak q} D_{i\nu}). \] 
\begin{lem}\label{equal rows}
Suppose that $\chc(\mathbb F) \ne 2$ and $n \ge 3$. Let $\mathfrak q = (q_{ij})$ be a multiplicatively antisymmetric matrix. Then no two rows of $\mathfrak{q}$ are equal if $ E = 0 $.  
\end{lem}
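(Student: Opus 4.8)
The plan is to prove the contrapositive of the lemma: I would show that if two rows of $\mathfrak q$ coincide, then $E \neq 0$. So first I would suppose that rows $i$ and $i'$ of $\mathfrak q$ are equal, with $i \neq i'$, i.e. $q_{ik} = q_{i'k}$ for every $k \in \{1, \dots, n\}$. Reading off the $i'$-th column of this equality and using multiplicative antisymmetry (so $q_{i'i'} = 1$) gives $q_{ii'} = 1$, and likewise $q_{i'i} = q_{ii} = 1$; this is the one relation I will need in order to know that the derivation introduced below is well defined.

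Next I would check that the standard basis vector $\mathbf e_{i'} \in \mathbb N^n$ belongs to $\Lambda_i$. Indeed $(\mathbf e_{i'})_i = 0$ because $i' \neq i$, and for every $j$ with $j \neq i$ one has $\prod_k q_{kj}^{(\mathbf e_{i'})_k} = q_{i'j} = q_{ij}$, where the last equality is precisely the hypothesis that rows $i$ and $i'$ agree (in the case $j = i'$ both sides equal $1$ by the previous paragraph). Hence $\mathbf e_{i'} \in \Lambda_i$, and therefore $D_{i\,\mathbf e_{i'}}$ is a genuine derivation of $\mathcal O_{\mathfrak q}$. It is nonzero since $D_{i\,\mathbf e_{i'}}(x_i) = \mathbf x^{\mathbf e_{i'}} = x_{i'} \neq 0$. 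As $\mathcal O_{\mathfrak q} D_{i\,\mathbf e_{i'}}$ is one of the direct summands of $E$, this forces $E \neq 0$, and the contrapositive is the assertion of the lemma.

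The argument is short, and I do not expect a real obstacle here; the only point to handle with care is confirming that $\mathbf e_{i'}$ genuinely satisfies the defining conditions of $\Lambda_i$, so that $D_{i\,\mathbf e_{i'}}$ is a legitimate derivation in the sense of \cite{AC1992}. This reduces to the row-equality hypothesis $q_{i'j} = q_{ij}$ for $j \neq i$ together with $q_{ii'} = 1$; the potentially delicate relation to recheck is the defining relation $x_i x_{i'} = q_{ii'}x_{i'}x_i$, which causes no trouble because $q_{ii'} = 1$. I would also note that the characteristic hypothesis $\chc(\mathbb F) \neq 2$ is not actually used in this direction; it is presumably retained only for uniformity with the other statements in the section.
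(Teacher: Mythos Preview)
Your proof is correct and follows essentially the same approach as the paper: both argue the contrapositive by showing that if rows $i$ and $i'$ (the paper uses $i$ and $j$) coincide, then the standard basis vector $\mathbf e_{i'}$ lies in $\Lambda_i$, whence $D_{i,\mathbf e_{i'}} \ne 0$ and $E \ne 0$. Your write-up is in fact slightly more careful, explicitly verifying the $j = i'$ case and noting (correctly) that the hypothesis $\chc(\mathbb F)\ne 2$ plays no role in this argument.
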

\begin{proof}
Suppose that  $ i$-th and  the $ j$-th rows of $\mathfrak{q}$ be equal, that is, $ q_{ir} = q_{jr}$ for all $ r\in \{1,2,\cdots,n\}$. Let $ k \neq i $  and $\nu:=(0,\cdots,1_j,\cdots,0) $. We claim that $ \nu \in  \Lambda_{i} $. Indeed  from the  definition of $ \Lambda_{i}$ we must have \[\prod_{p} q_{ip}^{{\nu}_{p}} = q_{ik}.\]  But this is true as $ q_{ik} = q_{jk}$ for all $ k \neq i $. Thus $ E \neq 0 $ - a contradiction. 
\end{proof}
\begin{thm}\emph{(\cite[Proposition 1.4.3]{AC1992})}\label{rect}
Suppose that $\chc(\mathbb F) = 0$ and $n \ge 3$. Let $\mathfrak q = (q_{ij})$ be a multiplicatively antisymmetric matrix such that for all $ i $ there exist  $ j $ such that $ q_{ij} \neq 1 $. If $ E = 0 $ then \[\aut_{\mathbb F}(\mathcal O_{\mathfrak q})=\aut_L(\mathcal O_{\mathfrak q}) \]
\end{thm}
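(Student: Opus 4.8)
The statement asserts that when $\chc(\mathbb{F}) = 0$, $n \ge 3$, every row of $\mathfrak{q}$ contains a non-unit entry, and $E = 0$, every $\mathbb{F}$-automorphism of $\mathcal{O}_{\mathfrak{q}}$ is linear. The natural approach is to study how an arbitrary automorphism $\sigma$ acts on the canonical generators and show it must send each $X_i$ to a homogeneous linear form. The key structural tool is the $\mathbb{N}^n$-grading of $\mathcal{O}_{\mathfrak{q}}$ by total degree (equivalently, the monomial filtration): one first shows that $\sigma$ must preserve the degree-$1$ component up to the action of the torus, and the hypothesis $E = 0$ is exactly what rules out the obstruction coming from "nilpotent" derivations $D_{i\nu}$ that would allow $\sigma(X_i)$ to pick up higher-degree terms $\mathbf{x}^\nu$ with $\nu \in \Lambda_i$.

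First I would recall (this is the content of \cite[Prop.~1.4.3]{AC1992}, which we are invoking) the general shape of the argument: write $\sigma(X_i) = \ell_i + (\text{higher order terms})$ where $\ell_i$ is the degree-$1$ part, and analyze the leading and subleading terms of the defining relations $\sigma(X_i)\sigma(X_j) = q_{ij}\sigma(X_j)\sigma(X_i)$. Comparing lowest-degree parts forces the $\ell_i$ to satisfy the same commutation relations, so either $(\ell_i)$ is already an invertible linear change of variables or some $\ell_i$ vanishes or becomes degenerate; a separate argument (using that $\sigma$ is invertible and that units of $\mathcal{O}_{\mathfrak{q}}$ are scalars times monomials, hence the degree-$0$ part of $\sigma(X_i)$ must vanish and the degree-$1$ parts must be linearly independent) shows $(\ell_i) \in \gl(n,\mathbb{F})$. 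Then, writing $\sigma(X_i) = \ell_i + h_i$ with $h_i$ of degree $\ge 2$, one feeds this back into the relations and shows, degree by degree, that each nonzero higher-degree component of $h_i$ would produce a nonzero derivation lying in $E$ — concretely, the "error terms" obstructing linearity are governed by elements of $\Lambda_i$, and $E = 0$ forces $\Lambda_i = \emptyset$ for all $i$ (after possibly conjugating by the linear part), hence $h_i = 0$. Here the hypothesis that each row of $\mathfrak{q}$ has a non-unit entry, together with $\chc(\mathbb{F}) = 0$, is used to ensure the relevant scalar coefficients $q_{ij} - 1$ etc. do not vanish so that the induction closes.

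I expect the main obstacle to be the bookkeeping in the inductive step: one must track how a putative lowest-degree "excess" term $\mathbf{x}^\nu$ in $\sigma(X_i) - \ell_i$ interacts with all the relations simultaneously, verify that $\nu$ necessarily lies in some $\Lambda_k$ (after the linear normalization), and conclude from $E = 0$ that no such term exists. This requires carefully separating the contribution of the linear part (which just permutes/rescales and is harmless, being already an automorphism by \lemref{equal rows} and Proposition~\ref{rem}) from the genuinely nonlinear contribution, and checking that the characteristic-zero hypothesis prevents the coefficient $q_{ij}q_{lk} - 1$ or $q_{ij} - q_{lk}$ in \eqref{AC-cond-lin-aut} from accidentally vanishing in a way that would allow extra freedom. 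Since this is precisely \cite[Proposition~1.4.3]{AC1992}, we may cite it; the role of our preceding \lemref{equal rows} and Observation~\ref{avd-1} is to make the interface with Theorem~1 clean, namely that under $E = 0$ the linear automorphism group is as described in Proposition~\ref{simple-cse-q(ij)-not-1}, so that $\aut_{\mathbb{F}}(\mathcal{O}_{\mathfrak{q}}) = \aut_L(\mathcal{O}_{\mathfrak{q}})$ is the final reduction needed.
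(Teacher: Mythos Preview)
The paper does not supply its own proof of this statement: the theorem is cited from \cite[Proposition~1.4.3]{AC1992}, and the only content the paper adds is the observation (with the counterexample $X_1 \mapsto X_1 + b$) that the hypothesis ``for every $i$ there exists $j$ with $q_{ij}\ne 1$'' must be assumed explicitly and does \emph{not} follow from $E=0$. Your proposal likewise ends by citing \cite{AC1992}, so on the level of what is actually proved the two agree.

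Your sketch of the Alev--Chamarie argument (degree filtration; show the linear part $(\ell_i)$ is invertible; then show any higher-degree remainder in $\sigma(X_i)$ would force a nonzero $D_{i\nu}\in E$) is the correct shape. One point to sharpen: you say the hypothesis that each row of $\mathfrak q$ has a non-unit entry ``is used to ensure the relevant scalar coefficients $q_{ij}-1$ etc.\ do not vanish so that the induction closes.'' That is not quite its role. The paper's counterexample makes the point precisely: if row $i$ of $\mathfrak q$ consists entirely of $1$'s then $X_i$ is central and $X_i\mapsto X_i+b$ is an automorphism with a nonzero \emph{degree-zero} term, which is not captured by the $\Lambda_i$-machinery (the sets $\Lambda_i$ live in $\mathbb N^n$ with $\nu\ne 0$). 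So the extra hypothesis is what rules out constant terms in $\sigma(X_i)$, i.e.\ it guarantees the filtration argument even gets started with $\ell_i$ in degree~$1$ rather than degree~$0$. Also, the references to \lemref{equal rows}, Observation~\ref{avd-1} and Proposition~\ref{rem} in your final paragraph are not part of the proof of Theorem~\ref{rect} itself; in the paper those results are used \emph{downstream}, inside the proof of Theorem~1, after Theorem~\ref{rect} has already reduced everything to linear automorphisms.
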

The above theorem is a rectification of the proposition of \cite[Section 1.4.3]{AC1992}.
There is a small error in the proof of this proposition. They used the fact that $ E = 0 $ implies for all $ i $ there exist  $ j $ such that $ q_{ij} \neq 1 $. This is not true in general: for  example, consider $ n = 3 $ and $ q_{12}=q_{13}= 1 $, $q_{23} = q$ where $ q $ is not a root of unity. Then clearly $ E = 0 $ but there is an automorphism $ \phi_{b} $ for each $ b \in {\mathbb F}^{\times} $ defined by 
\[\phi_b(X_{i})=  \begin{cases}
                     X_i  &\text{if } i \neq 1\\
                     X_i + b, & \text{if } i=1\end{cases}\]
 Clearly $ \phi_{b} $ is not a linear automorphism.
\begin{theorem_1}
Suppose that $\chc(\mathbb F) = 0$ and $n \ge 3$. Let $\mathfrak q = (q_{ij})$ be a multiplicatively antisymmetric matrix such that for $ i<j $ atmost one entry $ q_{ij}$ equals to $1$. Then
\begin{itemize}
    \item[(1)] if no $q_{ij}$ equals to  $1$ for all $ i < j$,  then the automorphism group  \[ \aut_{\mathbb F}(\mathcal O_{\mathfrak q}) = {(\mathbb F^\ast)}^n \] if and only if the following conditions hold
    \begin{itemize}
        \item[(a)] $ E= 0$\label{E=0} and
        \item[(b)] there does not exist any non-identity permutation matrix $m_{\sigma} $  satisfying 
        \begin{equation} \label{commute permutation}
            \mathfrak{q}\mathfrak{m_{\sigma}}=\mathfrak{m_{\sigma}}\mathfrak{q}.
        \end{equation}
    \end{itemize}
    \item[(2)] if exactly one entry, say $ q_{i'j'}$ with $i'<j'$ is $1$,  then  the automorphism group  \[ \aut_{\mathbb F}(\mathcal O_{\mathfrak q}) = {(\mathbb F^\ast)}^n \] if and only if the following conditions hold
    \begin{itemize}
        \item[(a)] \label{2nd E=0} $ E= 0$  and
        \item[(b)] there does not exist any permutation $\pi_{1}$ on the set $\{i',j'\} $ and  non-identity  permutation $ \pi_{2} $ on the set $ \mathcal{J}= \{1,2,\cdots,n\} \setminus \{i',j'\}$ satisfying
        \begin{equation}\label{2nd commute permutation}
          q_{i'r}=q_{\pi_{1}(i')\pi_{2}(r)}, \quad q_{j'r}=q_{\pi_{1}(j')\pi_{2}(r)},  \qquad \forall r \in \mathcal{J}  
        \end{equation}
        such that $\mathfrak{q'}\mathfrak{m}_{\pi_{2}}=\mathfrak{m}_{\pi_{2}}\mathfrak{q'} $
          where  $\mathfrak q'$ is a multiplicatively anti-symmetric matrix  obtained from $ \mathfrak{q}$ by removing the $ i',j' $ rows and columns simultaneously.
    \end{itemize}
\end{itemize}
\end{theorem_1}
\begin{proof}
$(1)$ We first  show the necessity.
By  definiition  if  $ E \neq 0 $ then there exists an $i$ such that $\Lambda_{i}\neq 0$. Let $\nu=(\nu_{1},\cdots,\nu_{n})\in \Lambda_{i}$ such that $ \nu \neq 0$. It is easy to check that   the automorphism $\exp(D_{i\nu})$ is not a member of $ \mathbb({F^{*}})^n$ where $D_{i\nu}$ is the locally nilpotent   derivation defined by $D_{i\nu}(X_{j})=\delta_{ij}X^{\nu}$. Suppose  there exist a non-identity permutation $ \sigma$ satisfying \eqref{commute permutation} then by Proposition \ref{rem} the map $ X_{i} \rightarrow X_{\sigma(i)}$ is a nontoric automorphism. 
  Conversely, suppose the conditions in (1) hold.  Now  \[\aut_{\mathbb F}(\mathcal O_{\mathfrak q})=\aut_L(\mathcal O_{\mathfrak q}) \] by the Theorem  \ref{rect}. Also from  Observation \ref{avd-1}  any linear automorphism must be of the form  $ X_{i} \rightarrow k_i X_{\sigma(i)} $ for some permutation $ \sigma $ where $ k_i \in F^{\times} $.  By  Proposition \ref{rem}  $\sigma$ must be the identity permutation. \newline
 $ (2)$ The proof of the necessity of the condition $ E= 0 $ is same as in part $(1)$. Again $(b)$ is necessary as otherwise  the map $ \phi $ defined by 
 \[\phi(X_{i})=  \begin{cases}
                     X_{\pi_{1}(i)}  &\text{if } i \in \{i',j'\}\\
                     X_{\pi_{2}(i)}, & \text{if } i\in\mathcal{J}\end{cases}\]
 is a nontoric automorphism of $\mathcal O_{\mathfrak q}$ by proposition \ref{rem}.
  Conversely, the conditions supposed that 2(a) and 2(b) hold.\\
As seen above in this case, we have     \[\aut_{\mathbb F}(\mathcal O_{\mathfrak q})=\aut_L(\mathcal O_{\mathfrak q}). \]\\
Suppose that \[ A = (\alpha_{ij}) \in \gl(n, \mathbb F)\] 
induces a linear automorphism $ \alpha$ of the given quantum affine space. 
Using \eqref{setkeql} it is easily seen that in any column of $A$ at most two entries can be nonzero and in the case there are two nonzero entries, these must be in the $i'$-th and 
$j'$-th rows. 

We claim that there can be at most two columns in $A$ that have two nonzero entries. Indeed suppose that $2 + s$ columns have two nonzero entries necessarily in the $i'$-th and $j'$-th rows. As just noted the remaining $n - 2 - s$ columns each has exactly one non-zero entry.  Clearly, if $s > 0$ these $n - 2 - s$ nonzero entries in $A$ cannot fulfill the requirement of a non-zero entry in each of the $n - 2$ rows other than the $i'$-th and $j'$-th rows. This shows that $s = 0$. 

Next we note that at least one of the entries $\alpha_{i'i'}$ and  $\alpha_{j'i'}$ is non-zero. To see this we pick $m < p$ in the range $1, \cdots,n$. 
By \eqref{AC-cond-lin-aut-re} we have noting that $q_{i'j'} = 1$  
\begin{align}
\label{apln_fundl_rln1}
\alpha_{i'm}\alpha_{j'p}(q_{mp} - 1) &= \alpha_{i'p}\alpha_{j'm}(q_{mp} - 1). 
\end{align} 
 If $(m,p) \ne (i',j')$ then by the hypothesis on the part $(2)$  $q_{mp} \ne 1$ and \eqref{apln_fundl_rln1} means that  
the minor of $A$ corresponding to the $2 \times 2$ submatrix formed by the $i'$-th and $j'$-th rows and the $m$-th and $p$-th columns is equal to zero. If $\alpha_{i'i'} = \alpha_{j'i'} = 0$ then the minor corresponding to the $2 \times 2$ submatrix $K$ defined by the $i'$-th and $j'$-th rows and the $i'$-th and $j'$-th columns is also equal to zero.
This would mean that a row of the exterior square $\wedge^2 A$ of $A$ is the zero row contradicting the assumption $A$ is non-singular. 

By the same token at least one of the entries $\alpha_{i'j'}$ and $\alpha_{j'j'}$ in column $j'$ is nonzero. 
Moreover, the non-zero entries in the two columns, namely, $i'$ and $j'$ cannot be in only one of the rows $i'$ or $j'$ as in this case the determinant of $K$ will be zero.

We now claim that if a column of $A$ has two non-zero entries then it must be the $i'$-th or the $j'$-th column. Indeed let $h$ be a column of $A$ having two non-zero entries. As noted above these non-zero entries of $h$ must be in rows $i'$ and $j'$. Thus the three columns $i'$, $j'$ and $h$ have non-zero entries only in rows $i'$ and $j'$. Consequently there can be at most $n -3$ nonzero entries in columns other than $i',j'$ and $h$ that are contained in the $n - 2$ rows other than $i'$ and $j'$. But this means there is a row with no non-zero entry contradicting the assumption that $A$ is non-singular.

It follows that $\alpha(X_{i'})$ and $\alpha(X_{j'})$ both lie in the $k$-subspace spanned by $X_{i'}$ and $X_{j'}$
while $\alpha(X_r)=k_{r}X_{\pi_{2}(r)}$ for all $r \in \mathcal{J} $ where $ k_{r}\in \mathbb{F}^{\times}$ and $ \pi_{2}$ is a permutation on the set $ \mathcal{J}$. 
 We claim that either $\alpha(X_{i'}) \in \mathbb F^{\times} X_{i'}$ or $\alpha(X_{i'}) \in \mathbb F^{\times} X_{j'}$. Indeed let \[ \alpha(X_{i'})=aX_{i'}+ bX_{j'} \] where $ a,b \in \mathbb{F}^{\times}$. Applying $\alpha$ to the relations 
 \[  X_{i'}X_{r}= q_{i'r} X_{r}X_{i'}, \qquad \forall r \in \mathcal{J}  \] 
 we obtain \[(aX_{i'}+ bX_{j'})X_{\pi_{2}(r)}=q_{i'r}X_{\pi_{2}(r)}(aX_{i'}+ bX_{j'})\]
 Comparing both sides we have
 
 \[q_{i'\pi_{2}(r)}= q_{i'r}= q_{j'\pi_{2}(r)}, \qquad \forall r \in \mathcal{J}. \] 
 Moreover the hypothesis $ q_{i'j'}=1$ means that $1=q_{i'i'}=q_{j'i'}$ and $1= q_{i'j'}=q_{j'j'}$. It follows that the $ i' $-th and $ j'$-th row of the matrix $ \mathfrak{q} $ coincide, which is contrary  the Lemma \ref{equal rows}. Similarly  $\alpha(X_{j'})\in \mathbb F^{\times} X_{i'}$  or $\alpha(X_{j'})\in \mathbb F^{\times} X_{j'}$. It follows that $\alpha(X_{i'})\in \mathbb F^{\times} X_{\pi_1 (i')}$ and $\alpha(X_{j'})\in \mathbb F^{\times} X_{\pi_1 (j')}$ for some  permutation $ \pi_{1} $ on the set $ \{1,2\}$. 
Now we claim that $ \pi_{2}$ is identity. If  not  from the relations $ X_{r}X_{s} = q_{rs} X_{s}X_{r}$ where $ r,s \in \mathcal{J}$ we have $\mathfrak{m}_{\pi_{2}}\mathfrak{q'} = \mathfrak{q'} \mathfrak{m_{\pi_{2}}}$.\\
 Applying $ \alpha $ to the relations \[ X_{i'}X_{r}= q_{i'r} X_{r}X_{i'},\qquad \forall r \in \mathcal{J} \] and \[X_{j'}X_{r}= q_{j'r} X_{r}X_{j'}, \qquad \forall r \in \mathcal{J} \]    we have $q_{i'r}=q_{\pi_{1}(i')\pi_{2}(r)}$, $q_{j'r}=q_{\pi_{1}(j')\pi_{2}(r)}$ respectively $ \forall r \in \mathcal{J}$ . But this contradicts the theorem  hypothesis. Thus $ \pi_{2}$ is an identity permutation. \\\\
 If  $\alpha(X_{i'})\in \mathbb F^{\times} X_{j'}$ and $\alpha(X_{j'})\in \mathbb F^{\times} X_{i'}$ then applying $\alpha $ to the relation \[ X_{i'}X_{r}= q_{i'r} X_{r}X_{i'}, \qquad \forall  r \in \mathcal{J} \] we have  $ q_{i'r}=q_{j'r} $   $ \forall  r \in \mathcal{J}$ which  contradicting the 
 hypothesis $ E=0 $ in view of Lemma \ref{equal rows}.
\end{proof}

 \section{Proof of Theorem 2}  


The following fact shown in \cite{OP1995} reduces the question of automorphisms to the case where the group $\Lambda$ (Definition \ref{lam-grp}) is torsion-free.   
\begin{lem}[\cite{OP1995}]\label{Lbda_crul_cse}
Let $p$ denote the size of the torsion subgroup of $\Lambda$. The subalgebra $\widehat{\mathcal O'}$ of $\widehat{\mathcal O}_{\mathfrak q}$ generated by the powers $X_i^{\pm p}$ of the indeterminates $X_i$ is a characteristic sub-algebra of the same rank. Moreover $\widehat {\mathcal O}_{\mathfrak q}$ is free left $\widehat{\mathcal O'}$-module of finite rank and the corresponding $\lambda$-group $\Lambda'$ associated with $\widehat{\mathcal O'}$ is torsion free.
\end{lem}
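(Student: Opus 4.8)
The plan is to work entirely inside the twisted group algebra picture of Section~\ref{aut-qtor}: write $\widehat{\mathcal O}_{\mathfrak q} = \mathbb F \ast \Gamma$ with $\Gamma = \mathbb Z^n$, and identify $\widehat{\mathcal O'}$ with the sub-twisted-group-algebra $\mathbb F \ast (p\Gamma)$ associated with the subgroup $p\Gamma = (p\mathbb Z)^n \le \Gamma$. First I would check this identification: $\widehat{\mathcal O'}$ is generated over $\mathbb F$ by the monomials $X_i^{\pm p} = \mathbf X^{\pm p\mathbf e_i}$, and since the $\pm p\mathbf e_i$ generate $p\Gamma$, any product of these monomials is a scalar multiple of some $\mathbf X^{\gamma}$ with $\gamma \in p\Gamma$, while conversely every such $\mathbf X^{\gamma}$ arises this way; hence $\widehat{\mathcal O'} = \mathbb F \ast (p\Gamma)$. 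Since $p\Gamma$ is free abelian of rank $n$, the algebra $\widehat{\mathcal O'}$ is again a rank-$n$ quantum torus, so it has the same rank as $\widehat{\mathcal O}_{\mathfrak q}$.

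Next I would identify the $\lambda$-group $\Lambda'$ of $\widehat{\mathcal O'}$. By bilinearity of the commutator bicharacter $\lambda$, the commutator of two generators is $[\mathbf X^{p\mathbf e_i}, \mathbf X^{p\mathbf e_j}] = \lambda(p\mathbf e_i, p\mathbf e_j) = q_{ij}^{\,p^2}$, so $\Lambda' = \langle q_{ij}^{\,p^2} : i<j\rangle$, which (as $\Lambda$ is abelian) coincides with the subgroup $\Lambda^{p^2}$ of $p^2$-th powers of $\Lambda$. To see that $\Lambda'$ is torsion-free, recall that $\Lambda$ is a finitely generated subgroup of $\mathbb F^\ast$, hence its torsion part $T$ is a finite subgroup of the multiplicative group of a field and is therefore cyclic; by definition $|T| = p$, so every element of $T$ has order dividing $p$. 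Writing $\Lambda \cong T \oplus \mathbb Z^{r}$ (the extension splits because $\Lambda/T$ is free), one gets $\Lambda^{p^2} \cong T^{p^2} \oplus (p^2\mathbb Z)^{r} = 1 \oplus (p^2\mathbb Z)^{r}$, which is free of rank $r$; in particular $\Lambda'$ is torsion-free, of the same torsion-free rank as $\Lambda$.

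For the characteristic property, let $\psi \in \aut_{\mathbb F}(\widehat{\mathcal O}_{\mathfrak q})$. Since the units of the quantum torus are trivial, $\psi(X_i) = c_i \mathbf X^{\gamma_i}$ for some $c_i \in \mathbb F^\ast$ and $\gamma_i \in \Gamma$. Then $\psi(X_i^{p}) = \psi(X_i)^{p} = (c_i \mathbf X^{\gamma_i})^{p}$, and in a twisted group algebra a power of a scalar multiple of the monomial $\mathbf X^{\gamma_i}$ is again a scalar multiple of $\mathbf X^{p\gamma_i}$; since $p\gamma_i \in p\Gamma$, this lies in $\mathbb F \ast (p\Gamma) = \widehat{\mathcal O'}$, and so does $\psi(X_i^{-p}) = \psi(X_i^{p})^{-1}$. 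As $\widehat{\mathcal O'}$ is generated by the $X_i^{\pm p}$ it follows that $\psi(\widehat{\mathcal O'}) \subseteq \widehat{\mathcal O'}$, and applying the same argument to $\psi^{-1}$ gives equality, so $\widehat{\mathcal O'}$ is characteristic. (Equivalently, $\psi$ induces an element $\sigma_\psi \in \gl(n,\mathbb Z)$ of $\aut \Gamma$ through $\mathbf e_i \mapsto \gamma_i$, and $p\Gamma$ is invariant under all of $\gl(n,\mathbb Z)$.)

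Finally, the free-module claim is the standard coset decomposition of a twisted group algebra recalled in Section~\ref{aut-qtor}: choosing a transversal $R$ for $p\Gamma$ in $\Gamma$, the monomials $\{\mathbf X^{r} : r \in R\}$ form a left $\widehat{\mathcal O'}$-basis of $\widehat{\mathcal O}_{\mathfrak q}$, because every $\gamma \in \Gamma$ is uniquely $b + r$ with $b \in p\Gamma$ and $r \in R$, whence $\mathbf X^{\gamma}$ is a scalar multiple of $\mathbf X^{b}\mathbf X^{r} \in \widehat{\mathcal O'}\,\mathbf X^{r}$, and the sum over $R$ of the subspaces $\widehat{\mathcal O'}\,\mathbf X^{r}$ is direct since they involve pairwise disjoint sets of basis monomials $\mathbf X^{\gamma}$. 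As $[\Gamma : p\Gamma] = p^{n}$, the module is free of rank $p^{n}$. The one step requiring genuine care is the second: pinning down which power of $\Lambda$ appears as $\Lambda'$ and verifying that the torsion subgroup of $\Lambda$ — being a finite subgroup of a field's multiplicative group — is cyclic, so that this power is honestly torsion-free; the remaining steps are bookkeeping with the twisted-group-algebra structure.
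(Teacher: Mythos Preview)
The paper does not supply its own proof of this lemma; it is simply quoted from \cite{OP1995} and used as a reduction step. Your argument is a correct self-contained proof: the identification $\widehat{\mathcal O'} = \mathbb F \ast (p\Gamma)$, the computation $\Lambda' = \langle q_{ij}^{p^2}\rangle = \Lambda^{p^2}$, the observation that the torsion part of $\Lambda$ is cyclic of order $p$ (as a finite subgroup of $\mathbb F^\ast$) so that $\Lambda^{p^2}$ is free, the characteristic property via triviality of units, and the free-module statement via coset representatives are all sound. One incidental remark: already $\Lambda^{p}$ is torsion-free, so the weaker conclusion you need for $\Lambda' = \Lambda^{p^2}\subseteq \Lambda^{p}$ follows a fortiori; your splitting $\Lambda \cong T \oplus \mathbb Z^{r}$ makes this transparent.
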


We may thus assume that $\Lambda \cong \mathbb Z^l$ for some natural number $l$. Fixing a $\mathbb Z$-basis $p_1, \cdots, p_l$ in $\Lambda$ we have    
 \begin{equation}\label{e_i-forms}
     \lambda(\gamma, \gamma') = p_1^{e_1(\gamma,\gamma')}p_2^{e_2(\gamma, \gamma')}\cdots p_l^{e_l(\gamma, \gamma')}, \qquad  \gamma, \gamma' \in \Gamma. 
\end{equation}
\begin{notation}\label{not1}
In view of $\eqref{e_i-forms}$ let  
\begin{equation} \label{rel-matrx}
    \lambda(\mathbf e_i, \mathbf e_j) = p_1^{ m_{(ij), 1}} \cdots  p_l^{m_{(ij), l}} , \qquad   1 \le i < j \le n,  
\end{equation} 
where $\mathbf e_i, \mathbf e_j$ are standard basis vectors of the free $\mathbb Z$-module $\Gamma$. On the $\binom{n}{2}$ pairs $(ij),\ (i < j)$ we assume the lexicographic order. Let $\mathsf M \in \mathrm{Mat}_{\binom{n}{2} \times l}(\mathbb Z)$ be the 
matrix whose $((ij), s)$ entry is the exponent $m_{(ij), s}$ of $p_s$ in \eqref{rel-matrx} ($s = 1, \cdots, l$). 
\end{notation}
 
 We recall that for a given matrix $A \in \gl(n, \mathbb Z)$ the exterior square $\wedge^2 A$ of $A$ is the $\binom{n}{2} \times \binom{n}{2}$-matrix whose rows and columns are indexed by the pairs $(ij)\ (1 \le i < j \le n)$ ordered lexicographically and whose $((ij), (kl))$ entry is the $2 \times 2$-minor corresponding to rows $i,j$ and columns $k,l$.  With $\mathsf M$ as defined above we have the following.
 
\begin{prop_A}
Set $N  =  \binom{n}{2}$ and let $\mathsf M$ be as defined in Notation \ref{not1} above.  Then 
\[  \aut (\mathbb Z^n, \lambda) = \bigl ( \stab_{\gl(n, \mathbb Z)}(\mathsf M) \bigr )^t  \] 
where $t$ denotes transposition and $\stab_{\gl(n, \mathbb Z)}(\mathsf M)$ the stabilizer of $\mathsf M$ in $\gl(n, \mathbb Z)$ with respect to the bivector representation 
\begin{equation*} \displaystyle\extp^2 : \gl(n, \mathbb Z) \rightarrow \gl(N, \mathbb Z), \qquad \  A \rightarrow \wedge^2 A 
\end{equation*}  
of $\gl(n, \mathbb Z)$, that is, 
\[ \stab_{\gl(n, \mathbb Z)}(\mathsf M) = \{ A \in \gl(n,Z) \mid (\wedge^2 A)\mathsf M = \mathsf M  \}. \] 
\end{prop_A}

\begin{proof}
Writing the group $\Lambda \le \mathbb F^{\ast}$ additively, in view of Notation \ref{not1} we have   
\begin{equation}\label{expn_in_gens}
    \lambda(\mathbf e_i, \mathbf e_j) = \sum_{s = 1}^l {m_{(ij),s}} p_s, \qquad\ \ \ \forall 1 \le i < j \le n,
\end{equation} 
where $m_{
(ij),s} \in \mathbb Z$. 
Now let \[ A = (a_{ij}) \in \gl(n, \mathbb Z) \] be such that $A^t \in \aut(\mathbb Z^n, \lambda)$.
Setting  \[ \mathbf e_j' = A^t \mathbf e_j=  \sum_{t = 1}^{n} a_{jt}\mathbf e_t  \]
we note that since $\lambda$ is an alternating function therefore $\lambda(\mathbf e_i', \mathbf e_j')$ may be expressed as follows:
\begin{equation}\label{transf_commtr}
    \lambda(\mathbf e_i', \mathbf e_j') = \sum_{(uv)} a_{(ij), (uv)}\lambda(\mathbf e_u, \mathbf e_v),  
\end{equation} 
where the coefficients appearing in the RHS of the above expression constitute row (ij) of the matrix $\wedge^2 A$. 
Since $A^t$ is $\lambda$-preserving, by \eqref{form_prsvng} we have
 \[ \lambda(\mathbf e_i', \mathbf e_j') = \lambda(\mathbf e_i, \mathbf e_j) \qquad\ \ \forall  1 \le i < j \le n. \] 
Expanding and comparing the coefficients of $p_s \ (s = 1, \cdots, l)$ in both sides of the last equation we get 
\begin{equation} \label{coeff_compr}
    \sum_{uv} a_{(ij), (uv)} m_{(uv),s}  =  m_{(ij),s} \qquad\forall   s = 1, \cdots, l.
\end{equation} 
It follows that
\[ \wedge^2(A)\mathsf M = \mathsf M. \]
Clearly the above reasoning is
 reversible. This establishes the assertion of the theorem.
\end{proof}

\begin{notation}\label{not2}
As before, let $\{\mathbf e_1, \cdots, \mathbf e_n\}$ denote the standard basis in $\Gamma = \mathbb Z^n$. Then $\{\mathbf e_i \wedge \mathbf e_j \mid 1 \le   i < j    \le n \}$ is a basis in $\wedge^2 \Gamma$. As usual, for a permutation $\pi \in S_n$ let $P \in \gl(n, \mathbb Z)$ denote the corresponding permutation matrix. 
Clearly $S_n$ acts on the $\mathbb Z$-module $\wedge^2 \Gamma$ via \end{notation}
\begin{equation}\label{actn-Sn}
\pi(\mathbf e_i \wedge \mathbf e_j) = \wedge^2 P (\mathbf e_i \wedge \mathbf e_j) = \mathbf e_{\pi(i)} \wedge \mathbf e_{\pi(j)}, \qquad \forall \pi \in S_n.    
\end{equation}  

By restriction we get an action of $S_n$ on the subset \[ \bar B =  
\{ \epsilon \mathbf e_i \wedge \mathbf e_j \mid  i < j \ \mathrm{and} \ \epsilon \in \{-1, 1\} \}. \] 

Restricting the above action of $S_n$ to $C_\pi := \langle \pi \rangle$ we consider the $C_\pi$-orbits.  
\begin{defn}\label{orb-sum}
We define an \emph{orbit-sum} $\mathscr O_{ij} \ (1 \le i \ne j \le n)$  as 
\[ \mathscr O_{ij} =  \wedge^2 P (\mathbf e_i \wedge \mathbf e_j) + (\wedge^2 P)^2 (\mathbf e_i \wedge \mathbf e_j) + \cdots + (\wedge^{2} P)^{m}(\mathbf e_i \wedge \mathbf e_j), \]
where $m$ stands for the order of $\wedge^2 P$.   
\end{defn}

\begin{lem} \label{fxd-sbmod-per}
Let $\pi \in S_n \ (n \ge 3)$ be a non-identity permutation and let $P$ denote the corresponding permutation matrix. Let  
$\mathrm{Fix}(\wedge^2 P)$ stand for the sub-module
of $\wedge^2 \Gamma = \mathbb Z^N \ (N = \binom{n}{2})$ left fixed by the $\mathbb Z$-linear map $\wedge^2 P \in \gl(N,\mathbb Z)$.
Then \[ \rk(\mathrm{Fix}(\wedge^2 P)) < \binom{n - 1}{2} + 1. \] 
\end{lem}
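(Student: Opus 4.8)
The plan is to decompose the permutation $\pi$ into disjoint cycles and compute $\rk(\fix(\wedge^2 P))$ combinatorially. Write $\pi = \sigma_1 \sigma_2 \cdots \sigma_k$ as a product of disjoint cycles (including fixed points as $1$-cycles) with cycle lengths $\ell_1, \ldots, \ell_k$, so $\sum \ell_t = n$. The key observation is that $\wedge^2 P$ acts on the basis $\{\mathbf e_i \wedge \mathbf e_j \mid i < j\}$ of $\wedge^2 \Gamma$ essentially as a signed permutation of the set $\bar B = \{\epsilon\, \mathbf e_i \wedge \mathbf e_j \mid i < j,\ \epsilon \in \{-1,1\}\}$, cf. \eqref{actn-Sn} and Notation~\ref{not2}. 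For any signed-permutation action on a basis, the rank of the fixed submodule equals the number of orbits on unordered basis pairs $\{\mathbf e_i, \mathbf e_j\}$ on which the induced sign is trivial (an orbit carrying a genuine sign change contributes $0$); equivalently, $\rk(\fix(\wedge^2 P))$ is at most the number of $\langle\pi\rangle$-orbits on the set of $2$-element subsets of $\{1, \ldots, n\}$. So the first step is to reduce to bounding this orbit count.

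Next I would count the orbits of $\langle \pi \rangle$ on $2$-subsets of $\{1,\dots,n\}$. Split these into ``intra-cycle'' pairs (both elements in the same cycle $\sigma_t$) and ``inter-cycle'' pairs (elements in distinct cycles $\sigma_s, \sigma_t$). A cyclic group $C_\ell$ acting on pairs within a single $\ell$-cycle has $\lfloor \ell/2 \rfloor$ orbits; between an $\ell_s$-cycle and an $\ell_t$-cycle the number of orbits is $\gcd(\ell_s, \ell_t)$. Hence
\[
\#\{\text{orbits}\} = \sum_{t=1}^k \left\lfloor \frac{\ell_t}{2} \right\rfloor + \sum_{s < t} \gcd(\ell_s, \ell_t).
\]
The claim then reduces to the purely arithmetic inequality
\[
\sum_{t=1}^k \left\lfloor \frac{\ell_t}{2} \right\rfloor + \sum_{s < t} \gcd(\ell_s, \ell_t) \ \leq\ \binom{n-1}{2},
\]
valid for every partition $\ell_1 + \cdots + \ell_k = n$ that is \emph{not} the all-ones partition (since $\pi \neq \mathrm{id}$), together with the observation that in the boundary case one still gets strict inequality $< \binom{n-1}{2}+1$, i.e. $\le \binom{n-1}{2}$, which is what is asserted.

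To prove that arithmetic inequality I would argue by an extremal/smoothing argument: compare the left side to the total number $\binom{n}{2}$ of $2$-subsets. The ``defect'' $\binom{n}{2} - \#\{\text{orbits}\}$ counts, roughly, the nontrivial identifications forced by $\pi$; one shows this defect is at least $n-1$. Concretely, within a single $\ell$-cycle the defect is $\binom{\ell}{2} - \lfloor \ell/2\rfloor \ge \ell - 1$ for $\ell \ge 2$ (with equality iff $\ell \in \{2,3\}$), and between two cycles the defect is $\ell_s \ell_t - \gcd(\ell_s,\ell_t) \ge \ell_s\ell_t - \min(\ell_s,\ell_t) \ge \max(\ell_s,\ell_t)$; summing these contributions against the tree-like structure of the $k$ cycles gives a total defect $\ge n-1$ whenever $\pi \ne \mathrm{id}$, and one checks the lone case where the bound $n-1$ is tight (e.g. a single transposition, $k = n-1$) still yields $\#\{\text{orbits}\} = \binom{n}{2} - (n-1) = \binom{n-1}{2}$, hence $< \binom{n-1}{2} + 1$.

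The main obstacle I anticipate is twofold: first, carefully justifying that $\rk(\fix(\wedge^2 P))$ equals (and is not merely bounded by) the count of sign-trivial orbits — this needs the standard fact that for a $\mathbb Z$-linear map permuting a basis up to signs, the fixed sublattice has a basis consisting of orbit-sums over sign-trivial orbits (orbit-sums $\mathscr O_{ij}$ of Definition~\ref{orb-sum}), and one must verify the sign-change orbits genuinely contribute nothing; second, pinning down the exact constant in the arithmetic inequality, i.e. proving the defect is $\ge n-1$ uniformly over all non-trivial partitions and identifying precisely the equality cases. Everything else is bookkeeping: the cycle decomposition, the $\lfloor \ell/2\rfloor$ and $\gcd$ orbit counts, and assembling the sum.
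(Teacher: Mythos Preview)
Your approach---direct orbit counting on $2$-subsets via the cycle type of $\pi$---is different from the paper's, which instead applies Burnside's lemma to the signed set $\bar B$ (of size $n(n-1)$) to bound the number $\mathcal N_\pi$ of $C_\pi$-orbits on $\bar B$, and then uses $\rk(\fix(\wedge^2 P)) \le \mathcal N_\pi/2$ together with the uniform bound $|\fix_{\{1,\dots,n\}}(\phi)| \le n-2$ for non-identity $\phi$. Your route is more explicit and in principle gives the exact rank; the paper's is shorter but coarser.

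However, there is a genuine error in your reduction. The arithmetic inequality you propose to prove,
\[
\sum_{t} \Bigl\lfloor \tfrac{\ell_t}{2} \Bigr\rfloor + \sum_{s < t} \gcd(\ell_s, \ell_t) \ \le\ \binom{n-1}{2},
\]
is \emph{false} for a single transposition. Take $\pi = (1\,2)$, cycle type $(2,1,\dots,1)$: the left side is $1 + (n-2) + \binom{n-2}{2} = \binom{n-1}{2} + 1$, not $\binom{n-1}{2}$. Your check of this case is off by one, and correspondingly your intra-cycle defect claim ``$\binom{\ell}{2} - \lfloor \ell/2\rfloor \ge \ell - 1$ for $\ell \ge 2$'' fails at $\ell = 2$ (it gives $0 \ge 1$). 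So bounding $\rk(\fix(\wedge^2 P))$ by the total number of orbits on $2$-subsets is too coarse: for a transposition that bound is exactly $\binom{n-1}{2}+1$, which is not $<\binom{n-1}{2}+1$.

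The fix is precisely the point you flag as an obstacle and then set aside: you must count only the \emph{sign-trivial} orbits. Inter-cycle orbits are always sign-trivial (the first return to the unordered pair $\{i,j\}$ occurs after $\mathrm{lcm}(\ell_s,\ell_t)$ steps and fixes both $i$ and $j$), but within an even $\ell$-cycle the ``diameter'' orbit at distance $\ell/2$ carries a sign change and contributes $0$. Thus the correct intra-cycle count is $\lfloor (\ell_t-1)/2\rfloor$, not $\lfloor \ell_t/2\rfloor$. With that correction the transposition case gives $0 + (n-2) + \binom{n-2}{2} = \binom{n-1}{2}$, the bound becomes tight exactly there, and a defect argument of the shape you sketch can then be made to work.
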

\begin{proof}
We begin by observing that the submodule $\mathrm{Fix}(\wedge^2 P)$ is generated by the orbit-sums $\mathscr O_{ij}$ of Definition \ref{orb-sum} where we note that either  $\mathscr O_{ij} = -  \mathscr O_{ji}$ or else 
$\mathscr O_{ij} =   \mathscr O_{ji} = 0$.
In other words if $w \in \mathrm{Fix}(\wedge^2 P)$ then 
\[ w = \sum_{ij} \gamma_{ij} \mathscr O_{ij}, \qquad \gamma_{ij} \in \mathbb Z. \]     

Denoting by $\mathcal N_\pi$ the number of $C_{\pi}$-orbits it follows from the above that $\mathrm{Fix}(\wedge^2 P)$ is generated by at most $\floor{\frac{N_\pi}{2}}$ orbit sums $\mathscr O_{ij}$ and thus 
\begin{equation} \label{rnk-bnd}
\rk(\mathrm{Fix}(\wedge^2 P)) \le  \frac{N_\pi}{2}. 
\end{equation}

Evidently for any non-identity permutation $\pi \in S_n$ the number of fixed points $\mathrm{Fix}(\pi)$ is bounded above by $n - 2$ and this bound in attained only by a transposition $(ij)$. Using this fact and the Burnside formula an upper bound for $\mathcal N_\pi$ was obtained in \cite{Sil} as follows  
\begin{align*} \mathcal N_\pi &=  \frac{1}{\abs {C_\pi}}\Biggl ( n(n - 1) + \sum_{\phi \in C_\pi , 
\phi \ne 1}2\binom{\mathrm{Fix}(\pi)}{2} \Biggr ) \le \frac{ n(n - 1)}{\abs{C_\pi}} +   \frac{(\abs{C_\pi} - 1)}{\abs{C_\pi}}2\binom{n - 2}{2} \\ &= 
(n - 2)(n -3) + \frac{4n -6}{\abs{C_\pi}} \le (n - 2)(n -3) + \frac{4n -6}{2} = n^2 - 3n + 3. 
\end{align*} 
Noting \eqref{rnk-bnd} We thus obtain 
 \[ \rk(\mathrm{Fix}(\wedge^2 P)  \le \frac{N_\pi}{2} =  \frac{n^2 - 3n + 3}{2} < \frac{n^2 - 3n + 4}{2}  = \binom{n - 1}{2} + 1. \]

\end{proof}

\begin{lem}\label{sbmd-fxd-by-M} For a quantum torus $\widehat{\mathcal O}_{\mathfrak q}$ suppose that the  $\lambda$-group $\Lambda$ is torsion-free with rank equal to $l$.  
Let $\mathsf M$ be the matrix defined in Notation \ref{not1} (with respect to some choice of a basis in the $\lambda$-group). The $l$ columns of $\mathsf M$ generate a free submodule of $\wedge^2 \Gamma = \mathbb Z^{\binom{n}{2}}$ of rank $l$. 
\end{lem}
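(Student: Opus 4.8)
The plan is to recognize $\mathsf M$ (up to transposition) as the matrix of the canonical surjection of $\wedge^2\Gamma$ onto the $\lambda$-group, and to deduce that it has full column rank $l$; freeness of the spanned submodule is then automatic over the PID $\mathbb Z$.

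First I would fix the identification. Since $\Lambda$ is torsion-free of rank $l$, the basis $p_1,\dots,p_l$ chosen in Notation \ref{not1} yields an isomorphism $\Lambda\cong\mathbb Z^l$ when $\Lambda$ is written additively. Define the $\mathbb Z$-linear map $\phi\colon\wedge^2\Gamma=\mathbb Z^{\binom n2}\longrightarrow\Lambda$ on the basis $\{\mathbf e_i\wedge\mathbf e_j\mid i<j\}$ by $\phi(\mathbf e_i\wedge\mathbf e_j)=\lambda(\mathbf e_i,\mathbf e_j)$. By Definition \ref{lam-grp} together with $\lambda(\mathbf e_i,\mathbf e_j)=q_{ij}$, the elements $\lambda(\mathbf e_i,\mathbf e_j)$ generate $\Lambda$, so $\phi$ is surjective. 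By \eqref{expn_in_gens}, under $\Lambda\cong\mathbb Z^l$ the map $\phi$ sends the basis vector indexed by $(ij)$ to $(m_{(ij),1},\dots,m_{(ij),l})$, i.e.\ to the $(ij)$-th row of $\mathsf M$; hence the matrix of $\phi$ in the standard bases is $\mathsf M^{t}$.

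Next I would read off the rank. A surjection $\mathbb Z^{\binom n2}\twoheadrightarrow\mathbb Z^l$ remains surjective after base change to $\mathbb Q$, so $\mathsf M^{t}$ defines a surjective $\mathbb Q$-linear map $\mathbb Q^{\binom n2}\to\mathbb Q^l$, whence $\rk\mathsf M=\rk\mathsf M^{t}=l$ (in particular $l\le\binom n2$). Having rank $l$, the $l$ columns of $\mathsf M$ are linearly independent over $\mathbb Q$, hence over $\mathbb Z$, so the submodule $L\le\wedge^2\Gamma$ that they generate has rank $l$; and $L$ is free because every submodule of the free $\mathbb Z$-module $\wedge^2\Gamma$ is free. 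This gives a free submodule of rank $l$, as claimed.

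I do not expect a genuine obstacle here: the only points requiring care are the bookkeeping of rows versus columns (it is $\mathsf M^{t}$, not $\mathsf M$, that represents $\phi$, because the rows of $\mathsf M$ carry the coordinate vectors of the $\lambda(\mathbf e_i,\mathbf e_j)$) and the observation that it is precisely the fact that the $\binom n2$ commutators $\lambda(\mathbf e_i,\mathbf e_j)$ \emph{generate} $\Lambda$ which forces the rank to be exactly $l$ rather than something smaller.
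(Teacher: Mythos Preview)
Your proof is correct and follows essentially the same approach as the paper: both arguments use that the $q_{ij}=\lambda(\mathbf e_i,\mathbf e_j)$ generate $\Lambda\cong\mathbb Z^l$, hence the rows of $\mathsf M$ span $\mathbb Z^l$, so $\mathsf M$ has (row and column) rank $l$, whence the columns span a free rank-$l$ submodule. Your packaging via the surjection $\phi\colon\wedge^2\Gamma\twoheadrightarrow\Lambda$ with matrix $\mathsf M^{t}$ is a slightly cleaner phrasing of the same idea.
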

\begin{proof}
We recall that for a matrix over a commutative ring $R$ the \emph{row rank} is defined as the maximum number of linearly independent rows of $R$ and \emph{column rank} has a parallel definition. If $R$ is a domain then these two ranks coincide (e.g., \cite[Chapter 4, Corollary 2.29]{AW}).   
The choice of a basis $\{p_1, \cdots, p_l \}$ in $\Lambda$ induces an isomorphism $\Lambda \cong \mathbb Z^l$. Since \[ \{ \lambda(\mathbf e_i, \mathbf e_j)  \mid 1 \le i < j \le n \}  \] is a generating set for the $\mathbb Z$-module $\Lambda$ therefore the rows of $\mathsf M$ generate $\Gamma_l : = \mathbb 
Z^l$. Let $s \le l$ be the row-rank of $\mathsf M$ and let $U$ be the free submodule spanned by some $s$ linearly independent rows. We note that $\Gamma_l/U$ is a torsion $\mathbb Z$-module and hence finite implying that 
\[ l = \rk(\Gamma_l) = \rk(U) = s.\] 
 The assertion in the lemma is now immediate.
\end{proof}

\begin{lem} \label{simpl-crit}
A quantum torus $\widehat{\mathcal O}_{\mathfrak q}$ whose $\lambda$-group is torsion-free and has rank at least $\binom{n -1}{2} + 1$ has center equal to $\mathbb F$. 
\end{lem}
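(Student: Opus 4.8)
The plan is to compute the center by reducing to the radical of the commutator bicharacter $\lambda$ (see \eqref{lmbdadefn}) and then running a rank count. Recall $\widehat{\mathcal O}_{\mathfrak q} = \mathbb F \ast \Gamma$ with $\Gamma = \mathbb Z^n$, and set
\[ Z := \{\,\gamma \in \Gamma : \lambda(\gamma, \gamma') = 1 \ \text{for all}\ \gamma' \in \Gamma\,\}. \]
Writing an arbitrary element as $z = \sum_{\gamma} a_\gamma \bar\gamma$ and comparing $\bar\delta z$ with $z\bar\delta$ for $\delta \in \Gamma$ — using that $\Gamma$ is abelian, so $\overline{\delta\gamma} = \overline{\gamma\delta}$ — one sees at once that $z$ is central iff $\lambda(\gamma,\delta) = 1$ for every $\gamma \in \supp(z)$ and every $\delta \in \Gamma$, i.e.\ iff $\supp(z) \subseteq Z$. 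Hence the center is $\bigoplus_{\gamma \in Z}\mathbb F\bar\gamma$, and since a nonzero $\gamma \in Z$ yields a central monomial $\mathbf X^\gamma \notin \mathbb F$, the center equals $\mathbb F$ exactly when $Z = 0$. So it suffices to prove $Z = 0$.

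Next I would translate the condition $\gamma \in Z$ into $\wedge^2 \Gamma = \mathbb Z^N$, $N = \binom{n}{2}$. Bilinearity and alternation of $\lambda$ together with $\lambda(\mathbf e_i, \mathbf e_j) = q_{ij}$ give
\[ \lambda(\gamma, \gamma') = \prod_{i < j} q_{ij}^{\,\gamma_i \gamma'_j - \gamma_j \gamma'_i}, \qquad \gamma, \gamma' \in \Gamma, \]
so the vector of exponents of the $q_{ij}$ on the right is exactly the coordinate vector of $\gamma \wedge \gamma' \in \wedge^2 \Gamma$. Writing $\Lambda$ additively with the basis $\{p_1, \dots, p_l\}$ of Notation \ref{not1}, a relation $\prod_{i<j} q_{ij}^{c_{ij}} = 1$ holds iff $\sum_{i<j} c_{ij} m_{(ij),s} = 0$ for every $s$, i.e.\ iff $\mathbf c = (c_{ij}) \in \mathbb Z^N$ is orthogonal, for the standard inner product, to each column of $\mathsf M$. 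Thus, letting $W \le \mathbb Z^N$ be the submodule spanned by the $l$ columns of $\mathsf M$, we obtain: $\gamma \in Z$ if and only if $\gamma \wedge \Gamma \subseteq W^{\perp}$.

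The argument then closes with a dimension count. Suppose $\gamma \in Z$ with $\gamma \ne 0$, and write $\gamma = d\gamma_0$ with $\gamma_0$ primitive; extending $\gamma_0 = f_1, f_2, \dots, f_n$ to a $\mathbb Z$-basis of $\Gamma$, the vectors $f_1 \wedge f_2, \dots, f_1 \wedge f_n$ lie in a basis of $\wedge^2 \Gamma$ and span $\gamma_0 \wedge \Gamma$, so $\rk(\gamma \wedge \Gamma) = n - 1$. On the other hand, Lemma \ref{sbmd-fxd-by-M} gives $\rk(W) = l$, whence (after tensoring with $\mathbb Q$ and using nondegeneracy of the standard form) $\rk(W^{\perp}) = N - l$; since $l \ge \binom{n-1}{2} + 1$ and $\binom{n}{2} - \binom{n-1}{2} = n - 1$, this forces $\rk(W^{\perp}) \le n - 2$. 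Therefore $n - 1 = \rk(\gamma \wedge \Gamma) \le \rk(W^{\perp}) \le n - 2$, a contradiction. Hence $Z = 0$ and the center of $\widehat{\mathcal O}_{\mathfrak q}$ is $\mathbb F$.

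I do not expect a genuine obstacle here: the substantive input, that the columns of $\mathsf M$ span a rank-$l$ free submodule, is already Lemma \ref{sbmd-fxd-by-M}, and the remainder is bookkeeping. The only points requiring a little care are carrying out the orthogonal-complement rank computation over $\mathbb Q$ rather than $\mathbb Z$, and confirming that $\rk(\gamma \wedge \Gamma) = n - 1$ for every nonzero $\gamma$, both of which are routine.
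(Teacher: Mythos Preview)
Your argument is correct, but it takes a different route from the paper's. The paper never touches $\wedge^2\Gamma$ or the matrix $\mathsf M$ here: instead it observes that $\Gamma/Z$ is torsion-free (because $\Lambda$ is), sets $\rho=\rk(\Gamma/Z)$, and notes that $\lambda$ descends to an alternating bicharacter $\bar\lambda$ on $\Gamma/Z$. Since $\Lambda$ is generated by the values $\bar\lambda(u_r+Z,u_s+Z)$ on a basis of $\Gamma/Z$, one gets $\rk(\Lambda)\le\binom{\rho}{2}=\binom{n-\rk(Z)}{2}$; if $Z\ne 0$ this forces $\rk(\Lambda)\le\binom{n-1}{2}$, contradicting the hypothesis. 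That proof is entirely self-contained and does not invoke Lemma~\ref{sbmd-fxd-by-M}. Your version instead encodes $\gamma\in Z$ as the condition $\gamma\wedge\Gamma\subseteq W^{\perp}$ and finishes with the rank comparison $n-1=\rk(\gamma\wedge\Gamma)\le\rk(W^{\perp})=N-l\le n-2$. Both are equally short; the paper's approach is a bit more conceptual (it directly bounds the number of generators of $\Lambda$), while yours dovetails with the $\wedge^2$-machinery already set up for Proposition~A and Theorem~2 and makes the role of $\mathsf M$ explicit.
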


\begin{proof}
 To this end we recall (e.g., \cite{MP}) that a quantum torus $\widehat{\mathcal O}_{\mathfrak q}$ has the structure of a twisted group algebra $\mathbb F \ast \Gamma$ for $\Gamma = \mathbb Z^n$. By \cite[Lemma 1.1(i)]{OP1995} the center of such an algebra is itself a twisted group algebra $\mathbb F \ast Z$ for a subgroup $Z \le \Gamma$. 

We claim that $\Gamma/ Z$ is torsion-free. Indeed let $\gamma \in \Gamma$ be such that $k \gamma \in Z$ for some $k \in \mathbb N$. In view of \eqref{lmbdadefn} \[  \lambda(k\gamma, \gamma') = \lambda(\gamma, \gamma')^k = 1 \qquad \forall \gamma' \in \Gamma. \] 
But as the group $\Lambda$ is torsion free (by hypothesis) therefore $\lambda(\gamma, \gamma') = 1$. Thus $\gamma \in Z$ and it follows that $\Gamma / Z$ is torsion-free. 
 Let $\rho := \rk(\Gamma/Z)$.
 As the map $\lambda$ of \eqref{lmbdadefn} is constant on the cosets of $Z$ in $\Gamma$ it induces an alternating bicharacter $\bar \lambda$ on $\Gamma/Z$ such that
 \[\bar{\lambda}(\gamma +  Z, \gamma' + Z)  = \lambda(\gamma, \gamma'), \qquad \forall \gamma, \gamma' \in \Gamma.  \]
 
 As the group $\Lambda$ is generated by the  \[q_{ij} = \lambda(\mathbf e_i, \mathbf e_j) =
 \bar {\lambda}(\mathbf e_i + Z, \mathbf e_j + Z ), \]  
 letting $u_1 + Z, \cdots u_\rho + Z$ be a basis of $\Gamma/Z$ we must have \[ \Lambda \le \langle \bar{\lambda}(u_r + Z, u_s + Z) \mid 1 \le r < s \le \rho \rangle = : \Lambda_1. \]  
 Clearly the group $\Lambda_1$ has rank at most $\binom {\rho}{2} = \binom{n - \rk(Z)}{2}$. If $Z$ is nontrivial we thus obtain  $\rk(\Lambda) \le \binom{n - 1}{2}$. But $\rk(\Lambda) = \binom{n - 1}{2} + 1$ by the hypothesis. It follows that $\rk(Z) = 0$ and consequently the center of $\widehat{\mathcal O}_{\mathfrak q}$ must be $\mathbb F$. 
\end{proof}

\begin{prop}\label{non-trivial-autog}
Let $\mathfrak q$ be a multiplicatively antisymmetric matrix such that $q_{kj} = 1 \ (j = 1, \cdots, n)$. Then $\aut_{\mathbb F}(\mathcal O_{\mathfrak q})= $ contains the group $(\mathbb F^\ast)^n \mathbb F^+ \rtimes \mathbb F^+$. 
\end{prop}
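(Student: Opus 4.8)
The plan is to exhibit the required automorphisms by hand and then check that they generate a copy of $(\mathbb F^\ast)^n\rtimes\mathbb F^+$. After a permutation of the indices we may take $k=1$, so the hypothesis says that the first row of $\mathfrak q$ — hence, by multiplicative anti-symmetry, the first column — is identically $1$. This is the same as saying that $X_1$ is a central element of $\mathcal O_{\mathfrak q}$, so that $\mathcal O_{\mathfrak q}=\mathcal O_{\mathfrak q'}[X_1]$ is a polynomial ring in the central variable $X_1$ over the quantum affine space $\mathcal O_{\mathfrak q'}$ in the variables $X_2,\dots,X_n$, where $\mathfrak q'$ is obtained from $\mathfrak q$ by deleting the first row and column.

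First I would record two families of automorphisms. (i) The algebraic torus $(\mathbb F^\ast)^n$ acts faithfully on $\mathcal O_{\mathfrak q}$ by the scalar (toric) automorphisms $t=(t_1,\dots,t_n)\colon X_i\mapsto t_iX_i$ (these obviously preserve every relation $X_iX_j=q_{ij}X_jX_i$). (ii) Since $0\in\Lambda_1$ — indeed $\prod_k q_{kj}^{0}=1=q_{1j}$ for every $j\ne 1$ — the derivation $D_{1,0}$ of \cite{AC1992} is defined, equals $\partial/\partial X_1$, and satisfies $D_{1,0}^2=0$; hence for each $b\in\mathbb F$ the map $\phi_b:=\exp(bD_{1,0})$, that is, $\phi_b(X_1)=X_1+b$ and $\phi_b(X_i)=X_i$ for $i\ge2$, is an $\mathbb F$-algebra automorphism of $\mathcal O_{\mathfrak q}$. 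Concretely one checks that $\phi_b$ respects every defining relation $X_iX_j=q_{ij}X_jX_i$: for $i,j\ge2$ there is nothing to verify, while the relations involving $X_1$ are simply $X_1X_j=X_jX_1$ (because $q_{1j}=1$), and these survive because $X_1+b$ is again central. In particular $E\ne0$ here (so Theorem \ref{rect} does not apply), and $b\mapsto\phi_b$ is an injective homomorphism $\mathbb F^+\hookrightarrow\aut_{\mathbb F}(\mathcal O_{\mathfrak q})$.

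Next I would fit the two families together. A one-line computation gives $t\phi_b t^{-1}=\phi_{t_1^{-1}b}$, so the subgroup $\{\phi_b:b\in\mathbb F\}\cong\mathbb F^+$ is normalized by $(\mathbb F^\ast)^n$; it meets $(\mathbb F^\ast)^n$ trivially, since for $b\ne0$ the automorphism $\phi_b$ carries $X_1$ to $X_1+b\notin\mathbb F^\ast X_1$. Hence the subgroup of $\aut_{\mathbb F}(\mathcal O_{\mathfrak q})$ generated by the torus and the $\phi_b$ is the internal semidirect product of $(\mathbb F^\ast)^n$ acting on the normal subgroup $\mathbb F^+=\{\phi_b\}$, which is the asserted $(\mathbb F^\ast)^n\rtimes\mathbb F^+$.

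I do not anticipate a real obstacle: each step is a direct verification, and the only place demanding any care is confirming in (ii) that $\phi_b$ is genuinely an algebra endomorphism — this is exactly where the hypothesis $q_{1j}=1$ is used, via the centrality of $X_1$ (and of $X_1+b$). For the subsequent use in the proof of Theorem 2 it is also worth noting that this construction leaves the $\lambda$-group unchanged: the multiparameters of $\mathcal O_{\mathfrak q}$ are those of $\mathfrak q'$ together with some $1$'s, so the $\lambda$-group of $\mathcal O_{\mathfrak q}$ equals that of $\mathcal O_{\mathfrak q'}$.
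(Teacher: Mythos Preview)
Your proposal is correct and follows essentially the same route as the paper: both define the automorphisms $\phi_b\colon X_k\mapsto X_k+b$, $X_i\mapsto X_i$ for $i\ne k$, verify that $b\mapsto\phi_b$ embeds $\mathbb F^+$, and then compute the conjugation identity showing that the torus $(\mathbb F^\ast)^n$ normalizes $\{\phi_b\}$. You add a few extra details the paper omits (the reduction to $k=1$, the interpretation of $\phi_b$ as $\exp(bD_{1,0})$, and the trivial-intersection check), but the argument is the same.
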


\begin{proof}
 Let $ b\in \mathbb F$. We define a map  $ \phi_{b} : \mathcal O_{\mathfrak q} \rightarrow \mathcal O_{\mathfrak q}$ via \[\phi_b(X_{i})=  \begin{cases}
                     X_i  &\text{if } i \neq k\\
                     X_i + b, & \text{if } i=k\end{cases}\]
It is easily checked that  $\phi_{b} \in  \aut_{\mathbb F}(\mathcal O_{\mathfrak q})$ and that $b \mapsto \phi_b$ is an embedding of $\mathbb F^+$ into $\aut_{\mathbb F}(\mathcal O_{\mathfrak q})$. If $\tau \in \aut_{\mathbb F}(\mathcal O_{\mathfrak q})$ defined by $\tau(X_i) = t_iX_i$ is a scalar automorphism in $(\mathbb F^\ast)^n$ then it easy to check that 
\[ \tau^{-1}\phi_{b}\tau=\phi_{t_k b_1}. \] 
In other words the image of $\mathbb F^+$ is normalized by the subgroup $(\mathbb F^\ast)^n$ of scalar automorphisms. The assertion of the proposition now follows. 
\end{proof}
\begin{theorem_2}
A quantum affine space $\mathcal O_{\mathfrak q} = \mathcal O_{\mathfrak q}(\mathbb F^n) \ (n \ge 3)$ whose $\lambda$-group is torsion-free and has rank no smaller than $\binom{n - 1}{2} + 1$ satisfies  
 \[ \aut_{\mathbb F}(\mathcal O_{\mathfrak q})= (\mathbb F^\ast)^n. \]  
Moreover, for each $n \ge 3$ and each $r  < \binom{n - 1}{2} + 1$ there exists an algebra $\mathcal O_{\mathfrak q}$ with $\lambda$-group equal to $\mathbb Z^r$ but whose automorphism group embeds $ (\mathbb F^\ast)^n \rtimes \mathbb F^+$. 
\end{theorem_2}    

\begin{proof}
By Lemma \ref{simpl-crit} the corresponding quantum torus $\widehat {\mathcal O}_{\mathfrak q}$ has center equal to $\mathbb F$.  Hence by \cite[Propositon 1.5]{OP1995} each $\mathbb F$-automorphism $\sigma$ of $\mathcal O_{\mathfrak q}$ lifts to an $\mathbb F$-automorphism $\hat{\sigma}$ of $\widehat {\mathcal O}_{\mathfrak q}$. Consequently there is permutation $\pi \in S_n$ such that \[ \hat{\sigma}(X_i) = \sigma (X_i) = a_iX_{\pi(i)}, \quad i \in  \{ 1,\cdots, n\}, a_i \in \mathbb F^\ast. \]  
Clearly, the image of $\hat \sigma$ in $\aut(\mathbb Z^n, \lambda)$ under the map in \eqref{actn-Gma} is the permutation matrix $P$ corresponding to a permutation $\pi \in S_n$.   
Since $P^t = P^{-1}$ hence $P^t \in \aut(\mathbb Z^n, \lambda)$ as well. Proposition A now tells us that $P \in \stab_{\gl(n, \mathbb Z)}(\mathsf M)$ where $\mathsf M$ is a relations matrix for $\widehat {\mathcal O}_{\mathfrak q}$ as defined in Notation \ref{not1}. In other words $(\wedge^2 P) \mathsf M = \mathsf M$. Thus $\wedge^2 P$ fixes each column of $\mathsf M$ and since $\wedge^2 P$ is $\mathbb Z$-linear it fixes the submodule $W$ of $\wedge^2 \Gamma$ spanned by the columns of $\mathsf M$.  By Lemma \ref{sbmd-fxd-by-M}  
\[ \rk(W) = \binom{n - 1}{2} + 1. \]
If $\pi$ is a  non-identity permutation this contradicts Lemma \ref{fxd-sbmod-per}. The first part of the theorem now follows. For the second part let $r \in \{ 1, \cdots , \binom{n - 1}{2} \}$. Clearly in this case we can find a (multiplicatively antisymmetric) matrix $\mathfrak q$ such that $q_{1j} = 1 \ (j = 2, \cdots, n)$ and the subgroup $\langle q_{ij} \mid i \ge 2, i < j \le n \rangle$ is torsion free with rank $r$. Noting Proposition \ref{non-trivial-autog} we are done.      
\end{proof}
\section{$\aut_{\mathbb F}(\mathcal O_{\mathfrak q})$ when $\dime (\widehat{\mathcal O}_{\mathfrak q})=1$} 
\begin{thm}
A quantum affine space $\mathcal O_{\mathfrak q}$ such that the corresponding quantum torus $\widehat{\mathcal O}_{\mathfrak q}$ has dimension one has a trivial automorphism group, that is, $\aut_{\mathbb F}(\mathcal O_{\mathfrak q})= (\mathbb F^\ast)^n$.
\end{thm}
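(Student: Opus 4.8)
The plan is to run the skeleton of the proof of Theorem~2: lift an automorphism of $\mathcal O_{\mathfrak q}$ to the quantum torus, deduce that it is monomial, and then use the dimension hypothesis in place of the rank estimates of Lemmas~\ref{fxd-sbmod-per} and~\ref{sbmd-fxd-by-M} to force the accompanying permutation to be trivial.

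First I would record that $\dim(\widehat{\mathcal O}_{\mathfrak q}) = 1$ forces the center of $\widehat{\mathcal O}_{\mathfrak q}$ to be $\mathbb F$; here we may assume $n \ge 2$, since for $n=1$ the torus is already commutative of rank one and there is no permutation to rule out (and indeed the statement is false there). By \cite[Lemma~1.1(i)]{OP1995} the center is a twisted group subalgebra $\mathbb F \ast Z$ with $Z \le \Gamma$; if some $0 \ne \gamma \in Z$, pick $\gamma'$ independent of $\gamma$ (possible as $n \ge 2$) and note that $\lambda$ is identically $1$ on $\langle \gamma, \gamma' \rangle$ because $\lambda(\gamma,\cdot)\equiv 1$, so $\mathbb F \ast \langle \gamma, \gamma' \rangle$ is commutative of rank $2$ and $\dim(\widehat{\mathcal O}_{\mathfrak q}) \ge 2$ by \S\ref{dim-of-qtorus}, a contradiction. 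With the center equal to $\mathbb F$, \cite[Proposition~1.5]{OP1995} lets every $\sigma \in \aut_{\mathbb F}(\mathcal O_{\mathfrak q})$ extend to $\hat\sigma \in \aut_{\mathbb F}(\widehat{\mathcal O}_{\mathfrak q})$. Since the units of a quantum torus are trivial and $\sigma(X_i),\sigma^{-1}(X_i)\in\mathcal O_{\mathfrak q}$, writing $\sigma(X_i)=a_i\mathbf X^{\gamma_i}$ forces $\gamma_i\in\mathbb N^n$, and the integer matrix with rows $\gamma_i$ lies in $\gl(n,\mathbb Z)$ and, together with its inverse (built from $\sigma^{-1}$), has nonnegative entries, hence is a permutation matrix. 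Thus $\sigma(X_i)=a_iX_{\pi(i)}$ for some $\pi\in S_n$, $a_i\in\mathbb F^\ast$, and (after normalizing the scalars) Proposition~\ref{rem} forces $q_{ij}=q_{\pi(i)\pi(j)}$ for all $i,j$; equivalently, writing $P$ for the permutation matrix of $\pi$, the bicharacter $\lambda$ is $P$-invariant.

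It then remains to show $\pi=\mathrm{id}$, and this is where the dimension hypothesis enters. If $\pi\neq\mathrm{id}$, choose $i$ with $\pi(i)\ne i$, let $O$ be the $\langle\pi\rangle$-orbit of $i$ (of size $\ell\ge 2$), and set $v=\mathbf e_i$ and $w=\sum_{j\in O}\mathbf e_j$; then $Pw=w$ and $v,w$ are $\mathbb Z$-independent. Using $Pw=w$ and the $P$-invariance of $\lambda$ one gets $\lambda(w,P^kv)=\lambda(w,v)$ for every $k$, and since $w=\sum_{k=0}^{\ell-1}P^kv$, multiplying out gives $1=\lambda(w,w)=\lambda(w,v)^\ell$. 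Hence $\lambda(w,v)$ is a root of unity of some order $d\mid\ell$; replacing $w$ by $dw$, the subgroup $\langle v,dw\rangle$ still has rank $2$ while $\lambda$ is now identically $1$ on it, so $\mathbb F\ast\langle v,dw\rangle$ is commutative of rank $2$ and $\dim(\widehat{\mathcal O}_{\mathfrak q})\ge 2$ by \S\ref{dim-of-qtorus}, a contradiction. Therefore $\pi=\mathrm{id}$, $\sigma(X_i)=a_iX_i$ is toric, and $\aut_{\mathbb F}(\mathcal O_{\mathfrak q})=(\mathbb F^\ast)^n$.

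The main obstacle, modest as it is, is the possible torsion in the $\lambda$-group: the orbit-sum computation only delivers $\lambda(w,v)^\ell=1$ rather than $\lambda(w,v)=1$, so one must rescale $w$ to $dw$ before invoking the dimension criterion of \S\ref{dim-of-qtorus}. Apart from that, every ingredient is already in place from the treatment of Theorems~1 and~2.
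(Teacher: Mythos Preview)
Your proof is correct and follows the same skeleton as the paper's: establish that the center of $\widehat{\mathcal O}_{\mathfrak q}$ is $\mathbb F$, invoke \cite{OP1995} to conclude that every automorphism of $\mathcal O_{\mathfrak q}$ is monomial of the form $X_i\mapsto a_iX_{\pi(i)}$ with $q_{ij}=q_{\pi(i)\pi(j)}$, and then use $\dim(\widehat{\mathcal O}_{\mathfrak q})=1$ to rule out nontrivial $\pi$.

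The only substantive difference is in that last step. The paper argues by cases on the cycle decomposition: a transposition $(ij)$ forces $q_{ij}^2=1$, so $X_i^2$ and $X_j$ commute; an $r$-cycle $(i_1\cdots i_r)$ with $r\ge 3$ forces $q_{i_{r-1}i_r}q_{i_1 i_r}=1$, so $X_{i_{r-1}}X_{i_1}$ and $X_{i_r}$ commute. In each case one obtains two independent commuting monomials directly, contradicting $\dim=1$. Your orbit-sum argument treats all cycle lengths at once via $w=\sum_{j\in O}\mathbf e_j$ and the identity $\lambda(w,\mathbf e_i)^{\ell}=1$, at the cost of the small torsion correction $w\mapsto dw$. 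Both routes are equally short; the paper's is more explicit and produces monomials that commute on the nose (so no torsion wrinkle), while yours is more uniform and avoids the case split. Your aside that the statement fails for $n=1$ is correct and worth recording, as the paper leaves the hypothesis $n\ge 2$ implicit.
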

\begin{proof}
Viewing the corresponding quantum torus $\widehat {\mathcal O}_{\mathfrak q}$ as a twisted group algebra $\mathbb F \ast \Gamma$ we recall (\cite[Lemma 1.1(i)]{OP1995}) that the center of this algebra has the form $\mathbb F \ast Z$ for a subgroup $Z \le \Gamma$. We recall (Section \ref{dim-of-qtorus}) that the dimension of the algebra $\widehat {\mathcal O}_{\mathfrak q}$ equals the cardinality of a maximal independent system of commuting monomials. 

Clearly by the hypothesis such a subgroup $B$ must have rank one. This easily implies that $Z$ is the trivial subgroup and thus $F \ast \Gamma$ has center equal to $\mathbb F$. By the proposition of \cite[Section 1.3]{MP} such an algebra is simple.     

In this case by \cite[Proposition 3.2]{OP1995} each $\mathbb F$-automorphism of $\mathcal O_{\mathfrak q}$ is of the form $X_i \mapsto a_i X_{\sigma(i)}$ where $a_i \in \mathbb F^{\ast}$ and $\sigma$ is a permutation of the subscripts $\{1, \cdots, n\}$. Furthermore, the permutation $\sigma$ occurs if and only if 
\begin{equation}\label{admperm}
q_{ij} = q_{\sigma(i) \sigma(j)}, \qquad \forall \le 1 \le i < j\le n.       
\end{equation}

We consider the decomposition of $\sigma$ into disjoint cycles. If there is a transposition $(ij)$ in this decomposition then by \eqref{admperm}
\[ q_{ij} = q_{ji} = q_{ij}^{-1} \]
and so $q^{2}_{ij}=1$. It follows that  
\[ [X_{i}^{2},X_{j}]=[X_{i},X_{j}]^{2}=q_{ij}^{2}=1. \]
But this means that $\dim(\widehat {\mathcal O}_{\mathfrak q}) \ge 2$ (para 1) and we thus get a contradiction to the hypothesis.

Now let $(i_1i_2i_3 \cdots i_r)$ be an $r$-cycle ($r \ge 3$) in the decomposition of $\sigma$. In view of \eqref{admperm} we have $q_{i_{r-1} i_{r}} =  q_{i_{r} i_{1}}$ whence \[ 1 = q_{i_{r-1} i_{r}}q_{i_{1} i_{r}} = [X_{i_{r-1}} X_{i_{1}}, X_{i_r}].  \]
But again (in view of para 1) this is a contradiction to the assumption on the dimension of the corresponding quantum torus
$\widehat {\mathcal O}_{\mathfrak q}$.
\end{proof}

For the quantum tori the case of dimension one means that no two independent monomials commute and so this may seem to be a rather restrictive. We may think that such a case can only occur if the $\lambda$-group has a relatively large rank, possibly even the maximal possible rank $\frac{1}{2}n(n-1)$. 

However an example was given in \cite[Section 3.11]{MP} of a quantum tori of rank $4$ that has dimension one and whose $\lambda$-group has rank equal to $5$. Using the results of \cite{GQ06} we will now show that there exist rank $n$ quantum tori having dimension one and whose $\lambda$-group has rank as low as $n$.  

\begin{example}
 Let $\mathbf q$ be the matrix  
 
  \begin{equation}
  \mathbf q =  \begin{pmatrix}
  1 &  {\zeta}^{-1}  & {\mu}^{-1}  & {\eta}^{-1}  \\
  {\zeta} &  1 & {\eta}^{-1}  & {\mu} \\
  {\mu} &  {\eta} & 1 & {\zeta}^{-1}  \\
  {\eta} &  {\mu}^{-1}  & {\zeta} & 1 \\
  \end{pmatrix},      
  \end{equation}
   where $\zeta, \mu, \nu \in \mathbb K^\ast$ are assumed to be multiplicatively independent.  Then the $\lambda$-group of the quantum torus ${\widehat{\mathcal{O}}}_{\mathfrak q} $  has rank $3$  but $\dim \widehat {\mathcal O}_{\mathfrak q} = 1$.
 \end{example}
\begin{proof}
We consider the three alternating forms $ \alpha_{i} \ (i = 1,\cdots,3)$ defined on the $\mathbb Z $-module $\mathbb Z^4$ given by 
\begin{align*}
   \alpha_{1}(x,y) &=x_{2}y_{1} - x_{1}y_{2} + x_{4}y_{3}
- x_{3}y_{4}  \\
 \alpha_{2}(x,y) &=x_{3}y_{1} - x_{1}y_{3} + x_{2}y_{4}
- x_{4}y_{2}  \\
 \alpha_{3}(x,y) &=x_{4}y_{1} - x_{1}y_{4} + x_{3}y_{2}
- x_{2}y_{3}   
\end{align*}
We claim that these three forms have no common isotropic $\mathbb{Z}$-submodule of $\mathbb Z^4$ of rank greater than one  by which we mean a $\mathbb Z$-submodule $B$ with $\rk(B) \ge 2$ such that the restriction of $\alpha_i$ to $B$ is trivial for all $i = 1\cdots 3$.   

Indeed we may regard $\alpha_i$ as a form on the $\mathbb Q$-space $V: = \mathbb Q^4 = \mathbb Z^4 \otimes_{\mathbb Z} \mathbb Q$. If $B$ is an isotropic submodule as in the preceding paragraph then clearly $B \otimes_{\mathbb Z} \mathbb Q$ is $\mathbb Q$-subspace of $\mathbb Q^4$ with dimension at least two that is  a common isotropic subspace for $\alpha_i \ (i = 1\cdots 3)$.    

Let $\mathbb H$ denote the quaternion algebra over $\mathbb Q$. We know that $\mathbb H$ is a division algebra with the usual $\mathbb Q$-basis $\{1, i, j , k \}$. It can be easily checked that the matrix images of $i$, $j$ and $k$ in the regular representation $\mathbb H \rightarrow \End_{\mathbb Q} \mathbb H$ of $\mathbb H$ are the gram matrices $M_i$ of the forms $\alpha_1, \alpha_2$ and $\alpha_3$ respectively. Since $\mathbb H$ is a division algebra the nonzero matrices in the $\mathbb Q$-subspace of $\End_{\mathbb Q} \mathbb H$ spanned by the $M_i\ (i = 1 \cdots 3)$ are  non-singular. Then by \cite[Corollary 4]{GQ06} the alternating forms $\alpha_i \ (i = 1, \cdots, 3)$ on the $\mathbb{Q}$-space ${\mathbb{Q}}^{4}$ have no common isotropic subspace of dimension greater than one.   
Thus there cannot exist a common isotropic submodule $B$ of rank greater than one for the given alternating forms $\alpha_i \ (i = 1, \cdots, 3)$. 
Let $\{\mathbf e_1, \cdots, \mathbf e_4 \}$ be the standard basis elements of $\mathbb Z^4$ and set 
 \[  q_{ij}= \zeta^{\alpha_{1}(\mathbf e_{i}, \mathbf e_{j})}\mu^{\alpha_{2}(\mathbf e_{i}, \mathbf e_{j})}\nu^{\alpha_{3}(\mathbf e_{i}, \mathbf e_{j})}, \qquad \forall 1\leq i < j \leq 4.  \] 
	Then $\mathbf q = (q_{ij})$ and the commutator map $\lambda$ (Section\ref{sec2.2}) of the quantum torus $\widehat {\mathcal O}_{\mathfrak q}$ has the form 
	\[ \lambda(\gamma, \gamma') = \zeta^{\alpha_{1}(\gamma,\gamma')}\mu^{\alpha_{2}(\gamma,\gamma')}\nu^{\alpha_{3}(\gamma,\gamma')}, \qquad \forall 1\leq i < j \leq 4.   \] 
 
 If there exist two independent commuting monomials $X^{\gamma},X^{\gamma '} \in \widehat {\mathcal O}_{\mathfrak q}$  then clearly $\alpha_{i}(\gamma,\gamma')= 0 $ for all $ i$, which is a contradiction as we have seen that there does not exist common isotropic submodule $B \le \mathbb Z^4$ with $\rk(B) \ge 2$ for the three forms $\alpha_i\ (i = 1, \cdots, 3)$. The theorem now follows in view of Section \ref{dim-of-qtorus}.
\end{proof}
\begin{remark}
Similarly using the (non-associative) octonion algebra over $\mathbb Q$  we may  define a quantum torus $\widehat{\mathcal{O}_{\mathfrak{q}}}$ of rank $8$ and dimension $1$ whose $\lambda$-group has rank $7$. 
\end{remark}
Finally we comment on the general case of quantum tori of rank $n$. It is shown in \cite{BGH, GQ06} that it is always possible to find $n$ alternating forms on  the space $\mathbb Q^n$ for which there is no common isotropic subspace of dimension greater than one. We may follow an approach similar to that of the preceding proposition to come up with examples of dimension one quantum tori of rank $n$ whose $\lambda$-group has rank equal to $n$. This is much smaller than the maximal possible rank $\frac{1}{2}n(n - 1)$.

\section*{Acknowledgements}
The first author thanks the National Board of Higher Mathematics (NBHM) for financial support (DAE/MATH/2015/059). The second author gratefully acknowledges support from an NBHM research award.

\end{document}